\definecolor{webgreen}{rgb}{0,.5,0}
\definecolor{webbrown}{rgb}{.6,0,0}
\newcommand{\seqnum}[1]{\href{https://oeis.org/#1}{\rm \underline{#1}}}
\def\suchthat{\,:\,}
\def\modd#1 #2{#1\ \mbox{\rm (mod}\ #2\mbox{\rm )}}
\def\Enn{{\mathbb{N}}}
\begin{document}


\theoremstyle{plain}
\newtheorem{theorem}{Theorem}
\newtheorem{corollary}[theorem]{Corollary}
\newtheorem{lemma}[theorem]{Lemma}
\newtheorem{proposition}[theorem]{Proposition}

\theoremstyle{definition}
\newtheorem{definition}[theorem]{Definition}
\newtheorem{example}[theorem]{Example}
\newtheorem{conjecture}[theorem]{Conjecture}

\theoremstyle{remark}
\newtheorem{remark}[theorem]{Remark}

\title{Some Fibonacci-Related Sequences}

\author{Benoit Cloitre\\
France \\
\href{mailto:benoit7848c@yahoo.fr}{\tt benoit7848c@yahoo.fr}\\
\and 
Jeffrey Shallit\\
School of Computer Science \\
University of Waterloo \\
Waterloo, ON N2L 3G1 \\
Canada\\
\href{mailto:shallit@uwaterloo.ca}{\tt shallit@uwaterloo.ca}
}

\maketitle

\begin{abstract}
We discuss an interesting sequence defined recursively;
namely, sequence \seqnum{A105774} from the
{\it On-Line Encyclopedia of Integer Sequences}, and study some
of its properties.   Our main tools are Fibonacci representation,
finite automata, and the {\tt Walnut} theorem-prover.
We also prove two new results about synchronized sequences.
\end{abstract}

\section{Introduction}

Define the Fibonacci numbers $(F_n)_{n \geq 0}$ as usual by 
the initial values $F_0 = 0$, $F_1 = 1$, and $F_n = F_{n-1} + F_{n-2}$
for $n \geq 2$.   In 2005, the first author defined an interesting sequence,
\seqnum{A105774}, $(a(n))_{n\geq 0}$ as follows:
\begin{equation}
a(n) = \begin{cases}
	n, & \text{if $n\leq 1$}; \\
	F_{j+1} - a(n-F_j), & \text{if $F_j < n \leq F_{j+1}$ for $j\geq 2$}.
	\end{cases}
	\label{eq1}
\end{equation}
The first few values of $(a(n))$ are given in Table~\ref{tab1}.
Although strictly speaking the sequence was originally defined only
for $n \geq 1$, it makes sense to set $a(0)=0$, which we have done.
\begin{table}[H]
\begin{center}
\begin{tabular}{c|ccccccccccccccccccccc}
$n$ & 0& 1& 2& 3& 4& 5& 6& 7& 8& 9&10&11&12&13&14&15&16&17&18&19&20 \\
\hline
$a(n)$ & 0& 1& 1& 2& 4& 4& 7& 7& 6&12&12&11& 9& 9&20&20&19&17&17&14&14
\end{tabular}
\end{center}
\label{tab1}
\end{table}
The sequence has an intricate fractal structure, which is depicted
in Figure~\ref{fig22}.
\begin{figure}[htb]
\begin{center}
\includegraphics[width=6.5in]{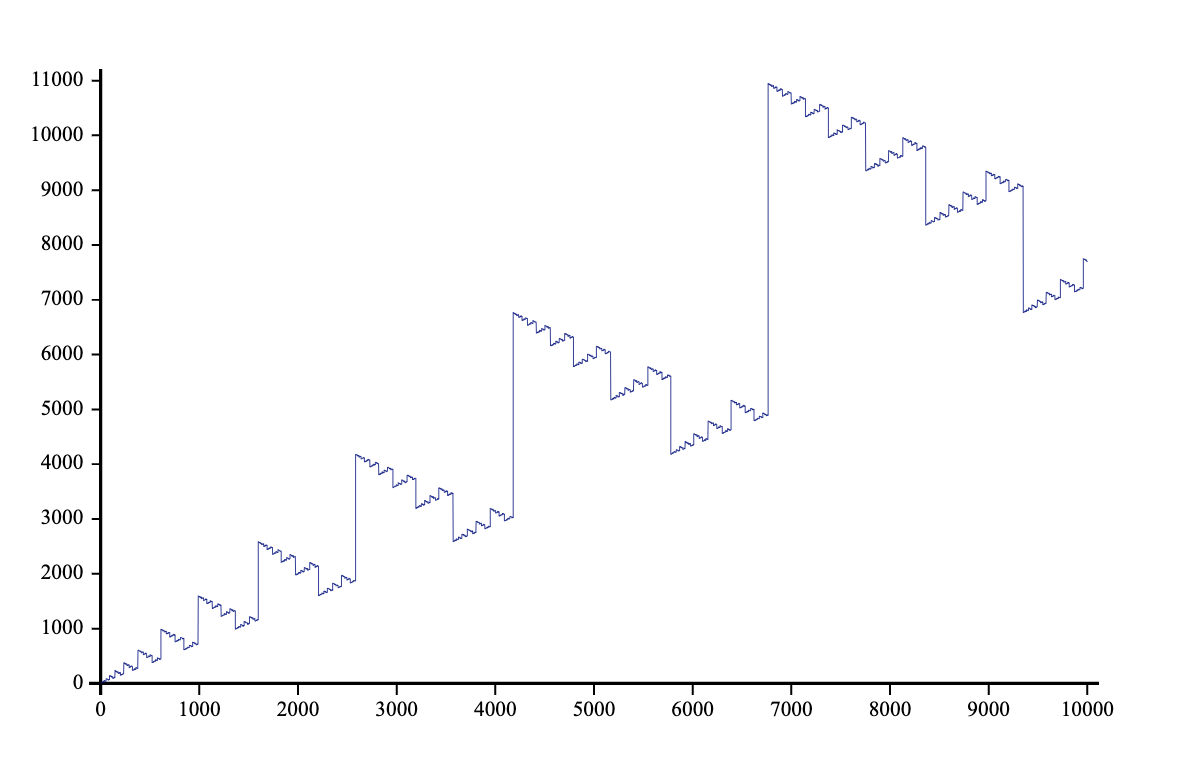}
\end{center}
\caption{Graph of $a(n)$.}
\label{fig22}
\end{figure}

A number of properties of this sequence were stated, without proof, in the
index entry for \seqnum{A105774} in the {\it On-Line Encyclopedia of
Integer Sequences} (OEIS) \cite{Sloane:2023}.
In this paper we prove these properties,
and many new ones, with the aid of finite automata and
the {\tt Walnut} theorem-prover
\cite{Mousavi:2016,Shallit:2022}.

We will need the Lucas numbers, $L_n$, defined
by $L_0 = 2$, $L_1 = 1$, and $L_n = L_{n-1} + L_{n-2}$ for $n \geq 2$.
Recall the Binet forms:
\begin{align}
F_n &= (\varphi^n - (-1/\varphi)^n)/\sqrt{5} \\
L_n &= \varphi^n + (-1/\varphi)^n .
\end{align}

In addition to studying \seqnum{A105774}, we also prove two new
results about synchronized sequences, in Theorems~\ref{new1} and
\ref{new2}.

\section{Fibonacci representations}

In this paper we represent the natural 
numbers $\Enn = \{ 0,1,2,\ldots \}$ in the Fibonacci (aka Zeckendorf)
numeration system \cite{Lekkerkerker:1952,Zeckendorf:1972}.
In this system we write $n$ as the sum of
distinct Fibonacci numbers $F_i$ for $i \geq 2$, subject to the
condition that no two consecutive Fibonacci numbers appear in the sum.
For example, $43 = F_9 + F_6 + F_2 = 34+8+1$.   Usually we write
this representation as a bit string $(n)_F = e_1 e_2 \cdots e_t$,
with $e_i \in \{0,1 \}$, $e_1 \not= 0$, and 
$e_i e_{i+1} \not=1$, such that $n = \sum_{1 \leq i \leq t} e_i F_{t-i+2}$.
For example, $(43)_F = 10010001$.   The inverse function, mapping
an arbitrary bit string (with no conditions) $x = e_1 e_2 \cdots e_t$
to the number it represents is defined
to be $[x]_F = \sum_{1 \leq i \leq t} e_i F_{t-i+2}$.

\section{Finite automata and regular expressions}

A finite automaton is a simple model of a computer.  In its simplest
incarnation, a {\it deterministic finite automaton\/} (DFA),
the automaton can be
in any one of a finite number of different states.   It takes
as input a finite string over a finite alphabet, and processes its
input symbol-by-symbol, choosing the next state through a lookup
table (the {\it transition function\/}) 
based only on the current state and the next input symbol.
Some of the states are designated as {\it accepting states}; an
input string is accepted if the automaton is in an accepting state after all 
its symbols are processed.  In this paper we represent integers in
Fibonacci representation, so we can think of the DFA as processing integers
instead of strings.   Thus, a DFA is a good model to represent a {\it set\/}
of natural numbers; namely, the set of all integers whose representations
are accepted.  In this sense, a DFA decides {\it membership\/} for
a set (or sequence).  DFA's are typically drawn as {\it transition diagrams},
where the arrows represent transitions between states, double circles
denote accepting states, and single circles denote nonaccepting states.

As an example, consider the automaton in Figure~\ref{fig0}.   It takes
the Fibonacci representation of a natural number $n$ as input, and accepts
if and only if $n$ is even.
\begin{figure}[H]
\begin{center}
\includegraphics[width=6.5in]{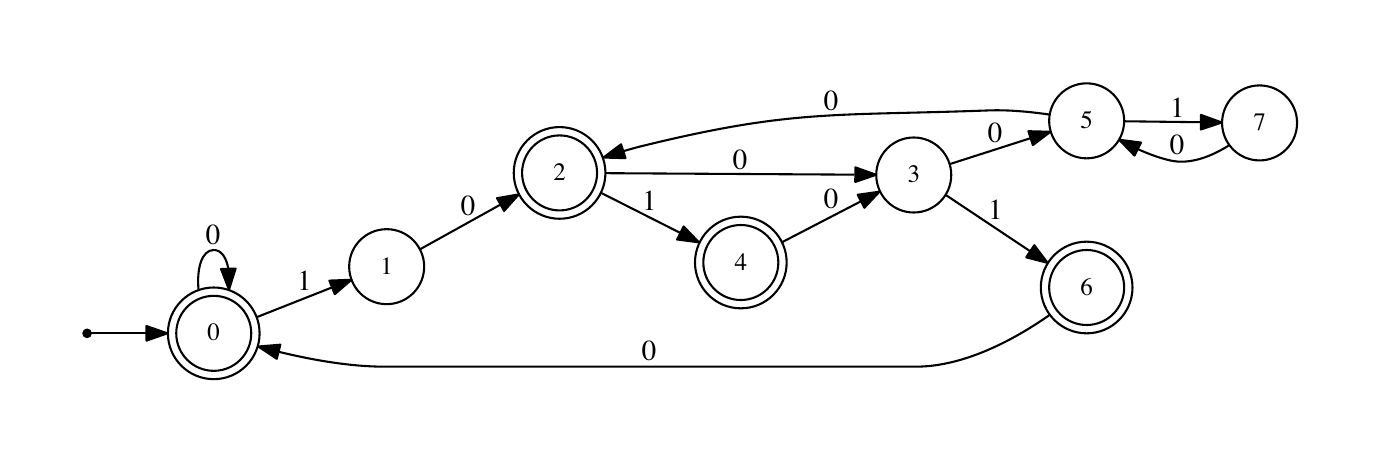}
\end{center}
\caption{Fibonacci automaton for the set of even numbers.}
\label{fig0}
\end{figure}

A slightly more complicated model is the {\it deterministic finite automaton
with output\/} (DFAO).  In this model, an output is associated with every
state, and the output on an input string $x$ is the output associated with
the last state reached after processing $x$.  This model is adequate for
representing sequences over a finite alphabet; since there are only finitely
many states, only finitely many different outputs are possible.
If a sequence $(a(n))$ is computed by a DFAO in the following way---the input
is the representation of $n$ and the output is $a(n)$---then we say
$(a(n))$ is {\it automatic}.  Again, a DFAO is often displayed as a transition
diagram, with the notation $q/i$ in a state meaning that the state is
numbered $q$ and has output $i$. 

For example, the transition diagram in Figure~\ref{fig15} illustrates
a DFAO computing the sequence $(n \bmod 3)_{n \geq 0}$, where the input
is in Fibonacci representation.
\begin{figure}[H]
\begin{center}
\includegraphics[width=6.5in]{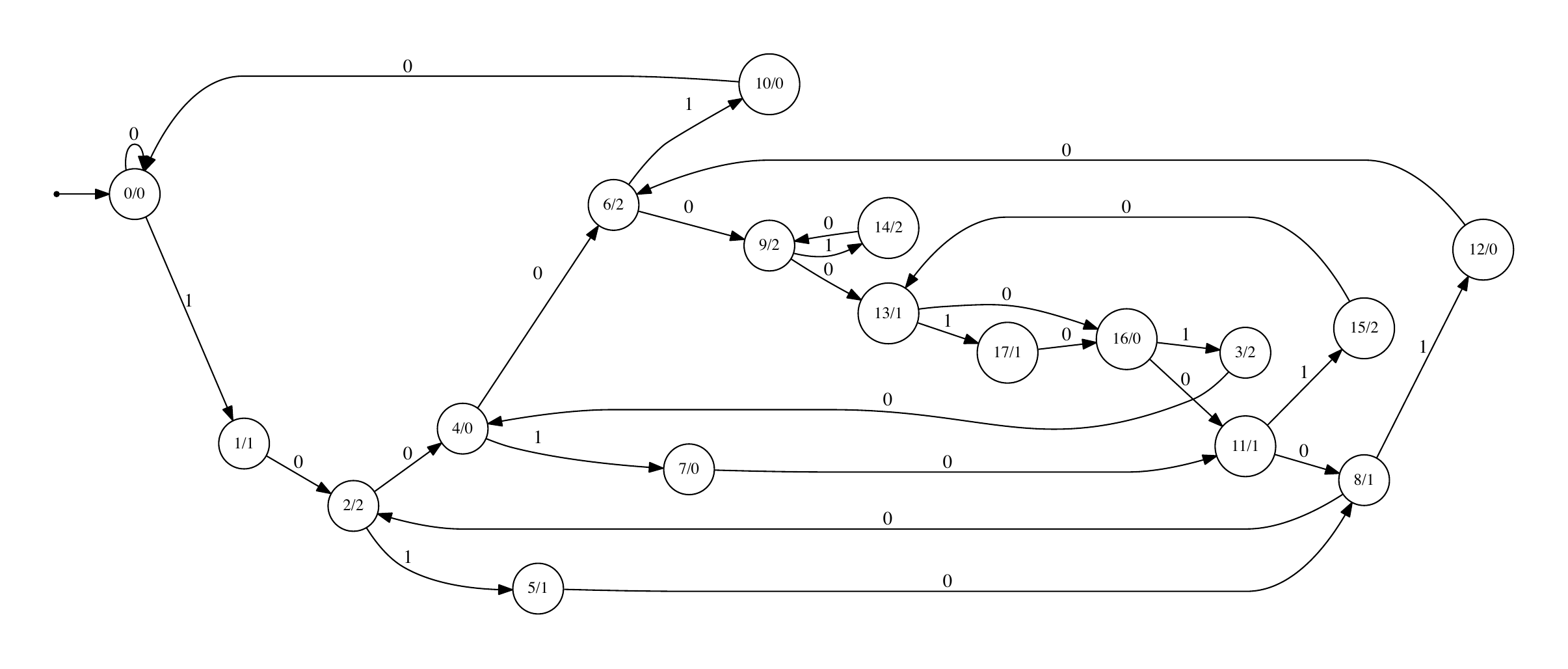}
\end{center}
\caption{Fibonacci DFAO computing $n \bmod 3$.}
\label{fig15}
\end{figure}
We note in passing that, due to a nice result of
Charlier et al.~\cite{Charlier},
the minimal DFAO computing
$(n \bmod k)_{n \geq 0}$ for $n$ in Fibonacci representation is
known to have $2k^2$ states.

Finally, there is the notion of {\it synchronized automaton\/} that computes
a function $f:\Enn \rightarrow \Enn$.  In this case, we use a DFA, but
with {\it two\/} integer inputs $n$ and $x$ read in parallel,
where the shorter input is padded with leading zeros, if necessary,
and the DFA accepts if and only if $x = f(n)$.  This model is suited
to representing sequences of natural numbers.  It has the important advantage
that if a sequence is represented in this way, then there is a decision
procedure to answer first-order queries about the values of the sequence,
in the logical structure $\langle \Enn, +, n \rightarrow f(n) \rangle$.
See \cite{Shallit:2021h} for more details.

As an example, the synchronized automaton in Figure~\ref{fig16} computes
the function $n \rightarrow \lfloor n/2 \rfloor$ in Fibonacci representation.
\begin{figure}[H]
\begin{center}
\includegraphics[width=6.5in]{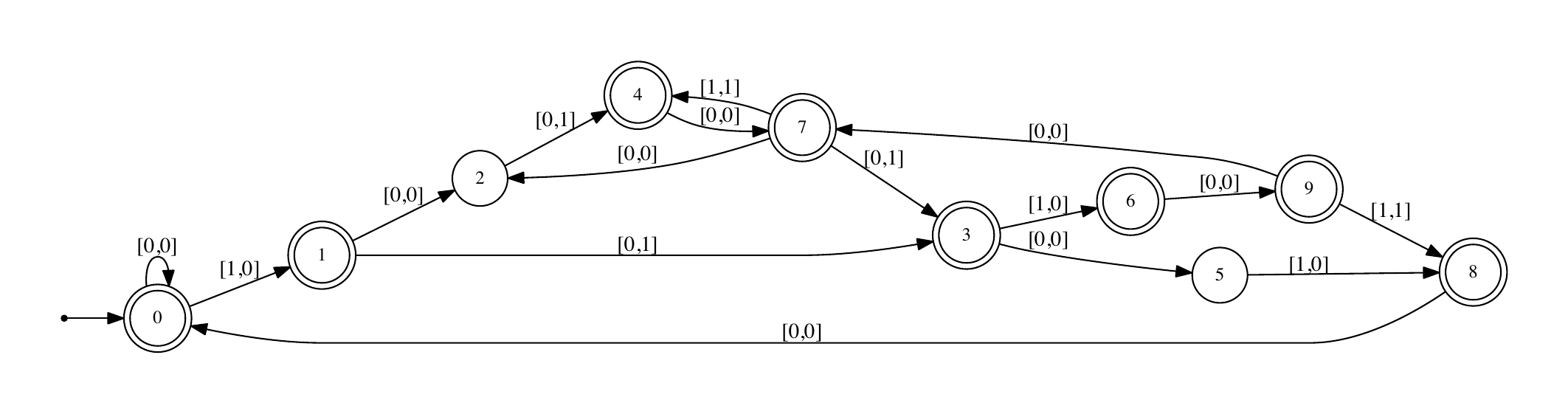}
\end{center}
\caption{Synchronized Fibonacci automaton computing $\lfloor n/2 \rfloor$.}
\label{fig16}
\end{figure}

It is quite useful to adopt the convention that in all three models, the result
is not sensitive to the presence or absence of leading zeros in the
representation of numbers.

The most powerful of all three representations for a sequence is the synchronized automaton, since one can obtain the other two from it easily.   The downside
is that it is quite possible that there exists a DFA to decide membership
in a sequence $\bf s$, 
but there is no corresponding synchronized
automaton computing the $n$'th term of $\bf s$.

In this paper, we sometimes use the notation called a {\it regular expression}.
For our purposes, the main thing to know
is that $x^*$, where $x$ is a single string or set of strings,
denotes the set of all strings consisting of zero or more 
concatenations of elements chosen from the set $x$ specifies.
For more about automata and regular expressions,
see \cite{Hopcroft&Ullman:1979}.

\section{{\tt Walnut}}

{\tt Walnut} is a theorem-prover for automatic sequences.  It can answer
first-order queries about automatic and synchronized sequences defined
over the natural numbers $\Enn$.
The syntax is basically first-order logic, with the following
meanings of symbols:
\begin{itemize}
\item {\tt \&} is logical AND, {\tt |} is logical OR,
{\tt \char'176} is logical NOT, {\tt =>} is implication, and
{\tt <=>} means IFF.

\item {\tt !=} means $\neq$ and {\tt x/y} denotes integer division, that
is, $\lfloor x/y \rfloor$.

\item {\tt A} is {\tt Walnut}'s representation for the universal quantifier
$\forall$ and {\tt E} is the representation for $\exists$.

\item {\tt reg} defines a regular expression.

\item {\tt def} defines an automaton for later use.

\item {\tt eval} evaluates a first-order logic expression with no free
variables, and returns either {\tt TRUE} or {\tt FALSE}.   

\item {\tt ?msd\_fib} is a bit of jargon telling {\tt Walnut} to represent
all numbers in the Fibonacci numeration system.

\end{itemize}

When {\tt Walnut} returns {\tt TRUE} (resp., {\tt FALSE}) to an {\tt eval}
query, the result is a {\it rigorous proof\/}
that the particular logical statement holds (resp., does not hold).

As examples, here is the {\tt Walnut} code for generating the automata
in Figures~\ref{fig0} and \ref{fig16}:
\begin{verbatim}
def even "?msd_fib Ek n=2*k":
def nover2 "?msd_fib z=n/2":
\end{verbatim}

We will need some {\tt Walnut} code for
the functions $n \rightarrow \lfloor \varphi n \rfloor$ and
$n \rightarrow \lfloor \varphi^2 n \rfloor$, where
$\varphi = (1+\sqrt{5})/2$, the golden ratio.
The synchronized automata for these can be found in \cite{Shallit:2022}, under
the names {\tt phin} and {\tt phi2n}, respectively.

\section{Guessing and checking}

One of the principal tools we use is the ``guess-and-check'' method.   Here we
{\it guess\/} a finite automaton computing a sequence based on empirical data,
and then use {\tt Walnut} to verify that it is correct, usually by checking
that the defining equation holds.  The guessing procedure is
described in \cite[\S 5.7]{Shallit:2022}.

We can carry out this guessing procedure
for the sequence \seqnum{A105774}, which we call 
$(a(n))$.  It produces a synchronized automaton that
computes the sequence in the following sense:  the input is a pair
$(n,x)$ in Fibonacci representation, and the automaton
accepts if and only if $x = a(n)$.  
The automaton {\tt a105774} that we guess is displayed in Figure~\ref{fig2}.
\begin{figure}[H]
\begin{center}
\includegraphics[width=6.5in]{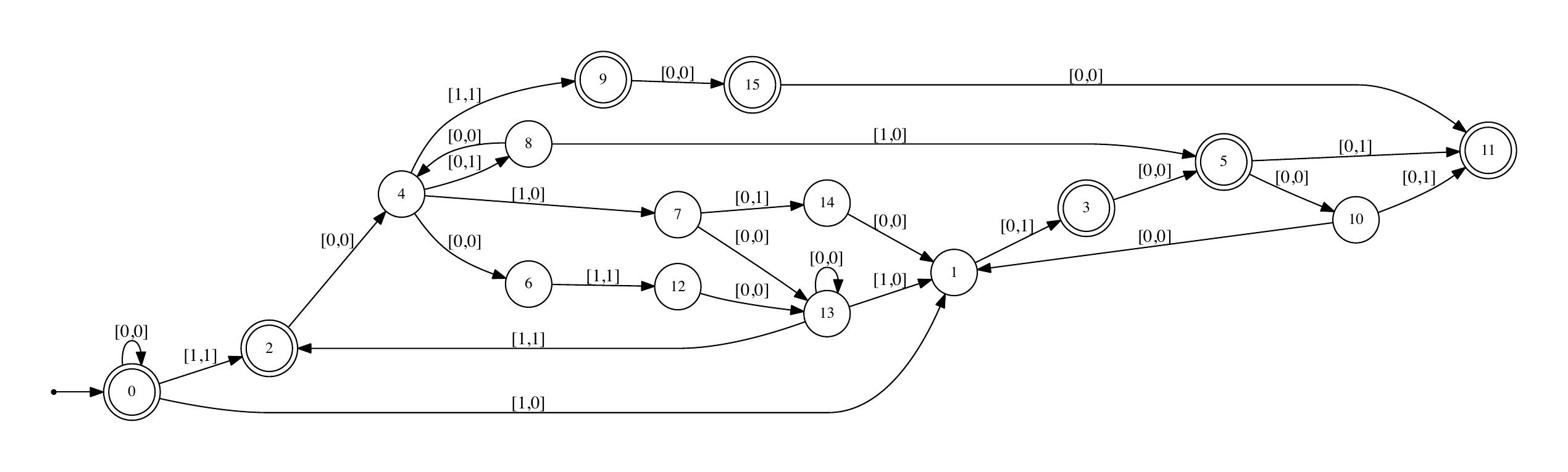}
\end{center}
\caption{Synchronized Fibonacci automaton for $a(n)$.}
\label{fig2}
\end{figure}

The next step, which is crucial, is to
verify that our guessed automaton is actually correct.  First we
must verify that the automaton really does compute a function.
This means that for each $n$ there is exactly one $x$ such that
the pair $(n,x)$ is accepted.  We can verify this as follows:
\begin{verbatim}
eval check_at_least_one "?msd_fib An Ex $a105774(n,x)":
eval check_at_most_one "?msd_fib ~En,x1,x2 x1!=x2 & $a105774(n,x1) &
   $a105774(n,x2)":
\end{verbatim}
and {\tt Walnut} returns {\tt TRUE} for both.

Next we must verify that our automaton obeys
the defining recurrence \eqref{eq1}.
\begin{verbatim}
reg adjfib msd_fib msd_fib "[0,0]*[0,1][1,0][0,0]*":
# accepts (F_k, F_{k+1})
def trapfib "?msd_fib $adjfib(x,y) & x<k & y>=k":
# accepts (k,x,y) if x is the largest Fibonacci number 
# less than k and y is the next largest Fib number
eval test105774 "?msd_fib Ak,x,y,z,t ($trapfib(k,x,y) & $a105774(k,z)
   & $a105774(k-x,t)) => y=z+t":
\end{verbatim}
and {\tt Walnut} returns {\tt TRUE}.  At this point we know that
our guessed automaton is correct.

Let's start by proving a basic property of \seqnum{A105774}.
\begin{proposition}
No natural number appears three or more times in \seqnum{A105774}.
\end{proposition}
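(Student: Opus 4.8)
The plan is to turn the statement into a decidable first-order sentence about the sequence $(a(n))$ and then discharge it with {\tt Walnut}, using the synchronized automaton {\tt a105774} of Figure~\ref{fig2}, which we have just certified to be correct against the defining recurrence~\eqref{eq1}. The reformulation is purely combinatorial: a value $v$ occurs three or more times in $(a(n))_{n\geq 0}$ exactly when there are three \emph{distinct} indices $n_1,n_2,n_3$ with $a(n_1)=a(n_2)=a(n_3)=v$, and we may as well impose the strict ordering $n_1<n_2<n_3$, which is harmless and keeps the formula small.

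Concretely, I would run the query
\begin{verbatim}
eval at_most_twice "?msd_fib ~En1,n2,n3,v (n1<n2 & n2<n3 & $a105774(n1,v)
   & $a105774(n2,v) & $a105774(n3,v))":
\end{verbatim}
Since {\tt a105774} is synchronized, the predicate \verb|$a105774(n,v)| holds if and only if $v=a(n)$; hence the sentence asserts precisely that no value is attained at three or more arguments. We expect {\tt Walnut} to return {\tt TRUE}, and by the semantics of {\tt eval} this is a rigorous proof of the proposition. (If desired, the sharper information—exactly which natural numbers occur once and which occur twice—could be extracted afterward by defining the predicate ``$v=a(n_1)=a(n_2)$ for some $n_1<n_2$'' and comparing it with its three-occurrence analogue; but for the proposition as stated, the single {\tt eval} above suffices.)

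The only real obstacle is the one inherent to this method: the block of existential quantifiers forces {\tt Walnut} to form, essentially, a product of three copies of {\tt a105774} (one per occurrence of $v$) on $4$-tuples $(n_1,n_2,n_3,v)$, a machine with on the order of $|Q|^3$ states, and then to project away the quantified variables, after which determinization could in principle be expensive. Because {\tt a105774} is small, this computation is well within reach and needs no special encoding, so I anticipate the verification goes through directly.
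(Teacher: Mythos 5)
Your proposal is correct and is essentially identical to the paper's proof: the paper runs the same {\tt Walnut} query (with variable names $x<y<z$ and value $n$ instead of $n_1<n_2<n_3$ and $v$), asserting that no three ordered indices map to the same value under the certified synchronized automaton, and {\tt Walnut} returns {\tt TRUE}.
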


\begin{proof}
We use the following {\tt Walnut} code.
\begin{verbatim}
eval test012 "?msd_fib ~Ex,y,z,n x<y & y<z & $a105774(x,n) &
   $a105774(y,n) & $a105774(z,n)":
\end{verbatim}
and {\tt Walnut} returns {\tt TRUE}.
\end{proof}

Now let's create a DFAO computing $c(n)$, the number of times each natural
number appears in \seqnum{A105774}.
We can do this with the following {\tt Walnut} commands:
\begin{verbatim}
def s0 "?msd_fib ~Ex $a105774(x,n)":
def s2 "?msd_fib Ex,y x<y & $a105774(x,n) & $a105774(y,n)":
def s1 "?msd_fib ~($s0(n)|$s2(n))":
combine C s1=1 s2=2 s0=0:
\end{verbatim}
This creates the DFAO in Figure~\ref{fig3}.
\begin{figure}[H]
\begin{center}
\includegraphics[width=6.5in]{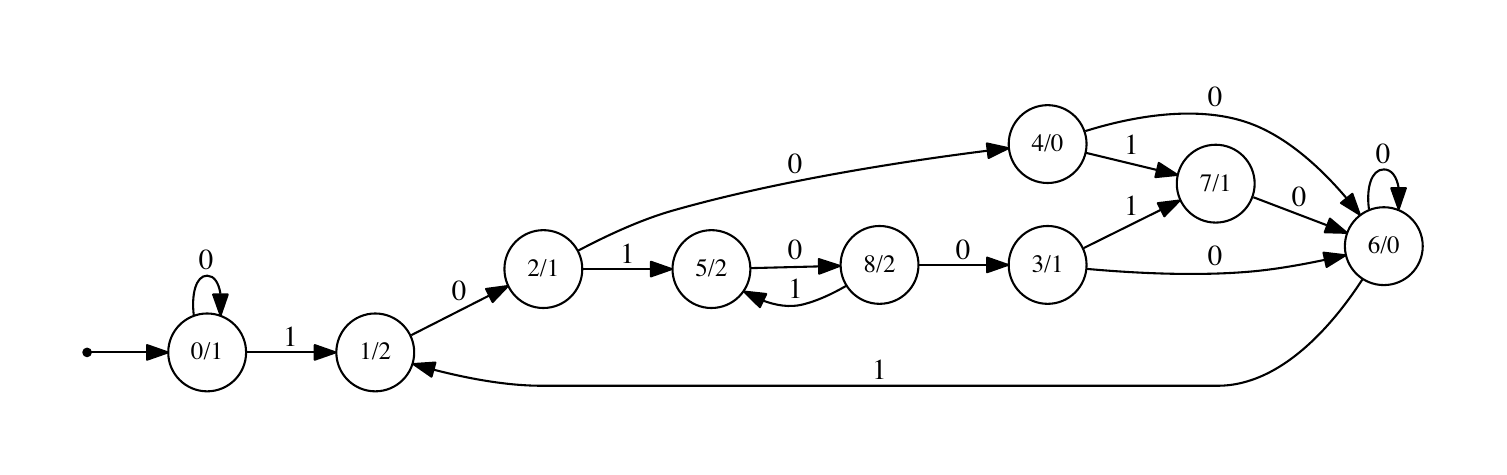}
\end{center}
\caption{Fibonacci DFAO for $c(n)$.}
\label{fig3}
\end{figure}
The first few values of the sequence $c(n)$ are given in 
Table~\ref{tab4}.
\begin{table}[H]
\begin{center}
\begin{tabular}{c|ccccccccccccccccccccc}
$n$ & 0& 1& 2& 3& 4& 5& 6& 7& 8& 9&10&11&12&13&14&15&16&17&18&19&20 \\
\hline
$c(n)$ & 1&2&1&0&2&0&1&2&0&2&0&1&2&0&2&1&0&2&0&1&2 
\end{tabular}
\end{center}
\label{tab4}
\end{table}
It is sequence \seqnum{A368199} in the OEIS.

\begin{proposition}
If $n$ appears twice in \seqnum{A105774}, the two occurrences are
consecutive.
\end{proposition}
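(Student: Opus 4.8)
The plan is to follow exactly the guess-and-check methodology already established in this paper. Since the synchronized automaton \texttt{a105774} has been verified to compute $a$ correctly, every first-order statement about $a$ in the structure $\langle \Enn, +, n \to a(n)\rangle$ is decidable by {\tt Walnut}; so it suffices to phrase the proposition as such a statement and let {\tt Walnut} evaluate it. First I would invoke the preceding proposition: since no value occurs three or more times, ``$n$ appears twice'' is unambiguous, and the claim reduces to showing that whenever $a(x) = a(y)$ with $x < y$, the indices satisfy $y = x+1$. Equivalently, there is no triple $(x,y,n)$ with $x+1 < y$ and $a(x) = a(y) = n$.

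Then I would run a query of the form
\begin{verbatim}
eval consec "?msd_fib Ax,y,n (x<y & $a105774(x,n) & $a105774(y,n)) => y=x+1":
\end{verbatim}
(or, equivalently, check that the negated existential form above has no solution). I expect {\tt Walnut} to return {\tt TRUE}, which constitutes a rigorous proof. The empirical data in Table~\ref{tab1} is consistent with this: each repeated value ($1,4,7,12,9,20,17,14,\ldots$) occurs at two consecutive indices.

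Honestly, there is very little obstacle here: the only genuine subtlety is making sure the formula faithfully captures ``the two occurrences are consecutive'' — which, thanks to the at-most-twice result, collapses to the single equation $y = x+1$ — and that the handling of leading zeros in the Fibonacci representations does not distort the small cases. All of the real mathematical content is already packed into the earlier verification of \texttt{a105774} against the recurrence \eqref{eq1}. One could alternatively try to argue directly from \eqref{eq1}, tracking precisely when $a(n) = a(n-1)$ forces a collision and ruling out non-adjacent collisions, but the reflection structure $a(n) = F_{j+1} - a(n - F_j)$ makes such a case analysis across the Fibonacci intervals noticeably more delicate than the automated check, so I would not pursue that route.
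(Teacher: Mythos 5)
Your proposal is correct and matches the paper's proof essentially verbatim: the paper runs the same universally quantified {\tt Walnut} query (named {\tt twice\_consec}) asserting that $x<y$ with $a(x)=a(y)$ forces $y=x+1$, and {\tt Walnut} returns {\tt TRUE}. No further comment is needed.
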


\begin{proof}
We use the following {\tt Walnut} code:
\begin{verbatim}
eval twice_consec "?msd_fib An,x,y (x<y & $a105774(x,n) & $a105774(y,n))
   => y=x+1":
\end{verbatim}
and {\tt Walnut} returns {\tt TRUE}.
\end{proof}

The positions $p_0(n)$ of the $n$'th $0$ 
(respectively, the $n$'th $1$ and the $n$'th $2$) in \seqnum{A368199} are also 
Fibonacci-synchronized.  Note that we index starting at $n= 0$.
Once again, we can prove this by guessing and checking.    We omit
the checks that our guessed automata {\tt p0,p1,p2} compute
synchronized functions. 

\begin{verbatim}
eval chek1a "?msd_fib An C[n]=@1 <=> Ek $p1(k,n)":
eval chek2a "?msd_fib An C[n]=@2 <=> Ek $p2(k,n)":
eval chek0b "?msd_fib Aj,m,n ($p0(j,m) & $p0(j+1,n)) => m<n":
eval chek1b "?msd_fib Aj,m,n ($p1(j,m) & $p1(j+1,n)) => m<n":
eval chek2b "?msd_fib Aj,m,n ($p2(j,m) & $p2(j+1,n)) => m<n":
\end{verbatim}
The first few values of $p_0$, $p_1$, and $p_2$ are given in Table~\ref{tab6}.
\begin{table}[H]
\begin{center}
\begin{tabular}{c|ccccccccccccccccccccc}
$n$ & 0& 1& 2& 3& 4& 5& 6& 7& 8& 9&10&11&12&13&14&15&16&17&18 \\
\hline
$p_0(n)$ & 3& 5& 8&10&13&16&18&21&24&26&29&31&34&37&39&42&45&47&50\\
$p_1(n)$ & 0& 2& 6&11&15&19&23&28&32&36&40&44&49&53&57&61&66&70&74\\
$p_2(n)$ & 1& 4& 7& 9&12&14&17&20&22&25&27&30&33&35&38&41&43&46&48\\
\end{tabular}
\end{center}
\label{tab6}
\end{table}
Each of these sequences is already in the OEIS.   We now prove the
characterizations.

\begin{proposition}
The numbers $p_2(n)$ are precisely those in
\seqnum{A007064}, that is, the natural numbers not of the form
$\lfloor \varphi k + {1 \over 2} \rfloor$ for $k \geq 0$.
\label{propp2}
\end{proposition}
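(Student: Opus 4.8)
The plan is to prove this by the guess-and-check method used throughout the paper: first guess a synchronized automaton for $p_2$, and then verify in \texttt{Walnut} that its image is exactly the complement of $\{\lfloor \varphi k + \tfrac12\rfloor : k \geq 0\}$. The main tool is that we already have a synchronized automaton for $p_2$ (mentioned just before the proposition, with the checks \texttt{chek2a} and \texttt{chek2b}), so the only remaining work is to express the set $\{\lfloor \varphi k + \tfrac12\rfloor\}$ inside \texttt{Walnut}'s first-order language over $\langle \Enn, +, n\to\lfloor\varphi n\rfloor\rangle$ and then check set equality with the image of $p_2$.

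First I would handle the arithmetic subtlety: $\lfloor \varphi k + \tfrac12\rfloor$ is the nearest integer to $\varphi k$, but \texttt{Walnut} cannot talk about halves directly. The standard trick is to note that $\lfloor \varphi k + \tfrac12\rfloor = \lfloor (\lfloor 2\varphi k\rfloor + 1)/2\rfloor$, or more simply $m = \lfloor \varphi k + \tfrac12\rfloor$ iff $2m \leq 2\varphi k + 1 < 2m+2$, i.e. $\lfloor 2\varphi k \rfloor \in \{2m-1, 2m\}$ — but even $2\varphi k$ is awkward. A cleaner route: using the synchronized function \texttt{phin} for $n\to\lfloor\varphi n\rfloor$ and \texttt{phi2n} for $n\to\lfloor\varphi^2 n\rfloor$ (both available per the excerpt, since $\varphi^2 = \varphi+1$), one has the characterization that $m$ is of the form $\lfloor \varphi k + \tfrac12\rfloor$ iff $\lfloor \varphi m \rfloor = \lfloor \varphi^2 k \rfloor$ or a similarly clean Beatty-type identity — indeed $\lfloor \varphi k + \tfrac12\rfloor$ is the $k$-th term of the Beatty-like sequence whose complement is governed by $\varphi^2$. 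Concretely I would define in \texttt{Walnut} a predicate \texttt{isround(m)} asserting $\exists k\ (\lfloor \varphi m\rfloor = \lfloor\varphi(\varphi k)\rfloor$-adjusted$)$, after working out on paper the exact inequality $\varphi m - \tfrac12 < \varphi k < \varphi m + \tfrac12$ rephrased via floors; the key identity to nail down is that $n = \lfloor\varphi k + \tfrac12\rfloor$ for some $k$ iff $\lfloor\varphi n\rfloor - \lfloor\varphi(n-1)\rfloor$ and the neighboring floor differences take a specific pattern, or equivalently iff $n \notin \{\lfloor \varphi^2 k - \varphi + \tfrac12\rfloor\}$ — this is precisely where \seqnum{A007064} comes from.

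Once the predicate for membership in $\{\lfloor\varphi k+\tfrac12\rfloor: k\ge 0\}$ is in hand (call it \texttt{round}), the verification is a one-line \texttt{eval}:
\begin{verbatim}
eval propp2 "?msd_fib An (Ek $p2(k,n)) <=> ~$round(n)":
\end{verbatim}
and we expect \texttt{Walnut} to return \texttt{TRUE}, which constitutes the rigorous proof. If a direct \texttt{round} predicate is hard to get right, the fallback is to instead guess a synchronized automaton \texttt{g} for the function $k\to\lfloor\varphi k+\tfrac12\rfloor$ directly from data, verify it computes a function and matches the defining inequality ($2x \le 2\varphi k + 1$ and $2\varphi k + 1 < 2x+2$, each checkable by comparing \texttt{phin}-values after clearing the factor of $2$), and then check that $n$ is in the image of \texttt{g} iff $n$ is not in the image of \texttt{p2}.

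The hard part will be the translation step: getting the half-integer rounding condition $\lfloor \varphi k + \tfrac12\rfloor$ expressed correctly in \texttt{Walnut}'s integer-only, floor-based language without off-by-one errors, since a single misplaced inequality will make the final \texttt{eval} return \texttt{FALSE} for the wrong reason. Everything after that — the set-equality check against the already-verified \texttt{p2} automaton — is routine and fully automated.
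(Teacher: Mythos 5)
Your proposal is essentially the paper's own proof: the paper uses exactly your first identity, $\lfloor \varphi k + \tfrac12\rfloor = \bigl\lfloor (\lfloor 2\varphi k\rfloor+1)/2 \bigr\rfloor$, implemented as \texttt{\$phin(2*n,x)} followed by \texttt{z=(x+1)/2}, and then checks that the image of $p_2$ is the complement of that set (via an explicit synchronized automaton for \seqnum{A007064}). The one step you flagged as awkward---expressing $2\varphi k$---is in fact immediate in \texttt{Walnut}, so your detour through Beatty-type identities (whose stated form, e.g.\ $\lfloor\varphi m\rfloor=\lfloor\varphi^2 k\rfloor$, is not correct as written) is unnecessary; your primary route already works.
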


\begin{proof}
The sequence \seqnum{A007067} is defined
to be $\lfloor n\varphi + 1/2 \rfloor$. Now 
$$\lfloor n \varphi + 1/2 \rfloor 
= \left\lfloor {{2n\varphi + 1} \over 2} \right\rfloor \\
= \left\lfloor {{ \lfloor 2n\varphi + 1 \rfloor }\over 2} \right\rfloor \\
= \left\lfloor {{ \lfloor 2n\varphi \rfloor + 1} \over 2} \right\rfloor ,
$$
where we have used the fact that 
$\lfloor {{\lfloor x \rfloor } \over n} \rfloor = \lfloor x/n \rfloor$ 
for real numbers $x$ and integers $n \geq 1$.
Hence \seqnum{A007067} can be computed by the following {\tt Walnut} code:
\begin{verbatim}
def a007067 "?msd_fib Ex $phin(2*n,x) & z=(x+1)/2":
\end{verbatim}

The sequence \seqnum{A007064} consists of those integers not in
\seqnum{A007067}.  It is not hard to see that \seqnum{A007064} is given
(after a change of index) by $n \rightarrow \lfloor n\varphi + \varphi/2 \rfloor + n+1$ for $n \geq 0$.  Indeed, we can verify this as follows:
\begin{verbatim}
def a007067 "?msd_fib Ex $phin(2*n,x) & z=(x+1)/2":
def a007064 "?msd_fib Ex $phin(2*n+1,x) & z=n+1+x/2":
eval check_two "?msd_fib Ax (Em $a007067(m,x)) <=> (~En $a007064(n,x))":
\end{verbatim}

Finally we can prove the result about $p_2$ as follows:
\begin{verbatim}
eval checkp2 "?msd_fib An (Ek $p2(k,n)) <=> (Em $a007064(m,n))":
\end{verbatim}
and {\tt Walnut} returns {\tt TRUE}.
\end{proof}

\begin{proposition}
The numbers $p_1(n)$, $n \geq 1$, are precisely those integers in
\seqnum{A035487}.
\end{proposition}

\begin{proof}
We use the following {\tt Walnut} command:
\begin{verbatim}
def a035487 "?msd_fib Ex $a007064(x) & $a007067(x,n)":
eval checkp1 "?msd_fib An (n>0) => ($a035487(n) <=> (Ek $p1(k,n)))":
\end{verbatim}
and {\tt Walnut} returns {\tt TRUE}.
\end{proof}

Finally, we prove the result for $p_0$:
\begin{proposition}
The natural numbers $p_0(n)$, i.e., the numbers
that do not appear in \seqnum{A105774} are
those given by the sequence
\seqnum{A004937}, that is,
$\{ \lfloor \varphi^2 n + {1\over2} \rfloor \suchthat n \geq 1 \}$.
\end{proposition}

\begin{proof}
We have, using the same idea as in the proof of
Proposition~\ref{propp2}, that
$$
\lfloor \varphi^2 n + 1/2 \rfloor 
= \left\lfloor {{{\lfloor \varphi^2 2n \rfloor} + 1} \over 2} \right\rfloor.$$
We can now use {\tt Walnut} to define a synchronized automaton
for \seqnum{A004937}:
\begin{verbatim}
def a004937 "?msd_fib Ex $phi2n(2*n,x) & z=(x+1)/2":
\end{verbatim}
The automaton for \seqnum{A004937} is given in Figure~\ref{fig4}.
\begin{figure}[H]
\begin{center}
\includegraphics[width=6.5in]{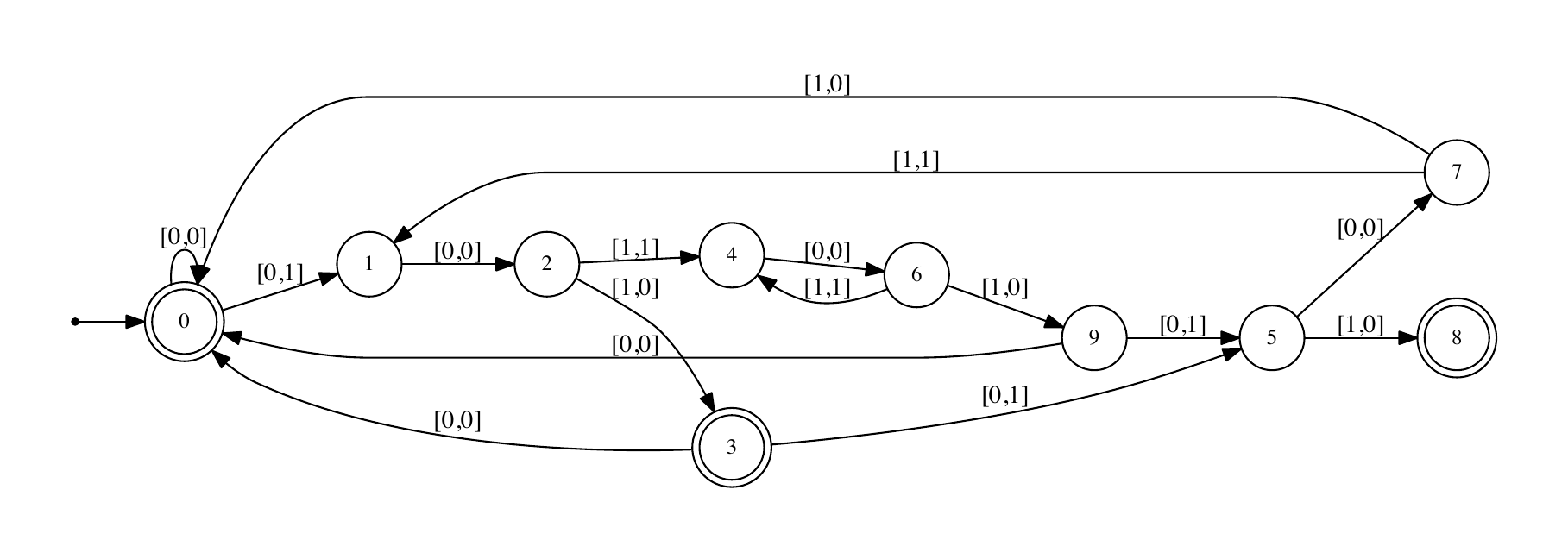}
\end{center}
\caption{Synchronized Fibonacci DFAO for \seqnum{A004937}.}
\label{fig4}
\end{figure}

We can now complete the proof as follows:
\begin{verbatim}
eval chk0 "?msd_fib An (n>0) => ((Ek $a004937(k,n)) <=> (Ej $p0(j,n)))":
\end{verbatim}
and {\tt Walnut} returns {\tt TRUE}.
\end{proof}

\section{Inequalities}

\begin{proposition}
For all $n \geq 0$ we have $\lfloor {{\varphi+2}\over 5}n \rfloor
\leq a(n) \leq \lfloor \varphi n \rfloor$.
\label{prop1}
\end{proposition}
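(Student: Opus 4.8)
The plan is to reduce both inequalities to a single \texttt{Walnut} query built from automata we already have on hand. We have the synchronized automaton \texttt{a105774} for $a(n)$ (now verified correct), and \cite{Shallit:2022} supplies synchronized automata \texttt{phin} and \texttt{phi2n} for the functions $n \mapsto \lfloor \varphi n\rfloor$ and $n \mapsto \lfloor \varphi^2 n\rfloor$. Hence the upper bound $a(n) \le \lfloor \varphi n\rfloor$ requires no preprocessing at all: it is simply the assertion that whenever $\texttt{a105774}(n,x)$ and $\texttt{phin}(n,z)$ both hold, we have $x \le z$.

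For the lower bound I would first put $\lfloor \tfrac{\varphi+2}{5}n\rfloor$ into a form \texttt{Walnut} can handle directly. Writing $\tfrac{\varphi+2}{5}n = \tfrac{\varphi n + 2n}{5}$ and using the elementary identity $\lfloor (x+m)/k\rfloor = \lfloor(\lfloor x\rfloor + m)/k\rfloor$, valid for real $x$, integer $m$, and positive integer $k$ (the same trick used in the proof of Proposition~\ref{propp2}), we obtain
\[
  \left\lfloor \frac{\varphi+2}{5}\, n \right\rfloor
  \;=\; \left\lfloor \frac{\lfloor \varphi n\rfloor + 2n}{5} \right\rfloor ,
\]
which gives a synchronized automaton for the lower bound:
\begin{verbatim}
def lowerbound "?msd_fib Ex $phin(n,x) & z=(x+2*n)/5":
\end{verbatim}
The entire proposition is then one query:
\begin{verbatim}
eval checkprop1 "?msd_fib An,x,y,z ($a105774(n,x) & $lowerbound(n,y)
   & $phin(n,z)) => (y<=x & x<=z)":
\end{verbatim}
and we expect \texttt{Walnut} to return \texttt{TRUE}, which by soundness of the decision procedure constitutes a rigorous proof (the case $n=0$ is subsumed, since $\Enn$ includes $0$).

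I expect the only real difficulty to be in the \emph{encoding}, not in the proof search: one must be sure that the constant $\tfrac{\varphi+2}{5}$ and the floor manipulation have been transcribed faithfully, since an off-by-one there would make the statement false (and \texttt{Walnut} would simply return \texttt{FALSE}). As a cross-check, one can use the identity $\varphi+2 = \varphi^2+1$ to rewrite the bound as $\lfloor(\lfloor\varphi^2 n\rfloor + n)/5\rfloor$ via \texttt{phi2n}, and then verify with a one-line \texttt{Walnut} command that this alternative definition agrees with \texttt{lowerbound} on all $n$. The automata involved here are small, so there should be no performance concern.
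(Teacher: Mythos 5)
Your proposal is correct and takes essentially the same route as the paper: the paper also verifies the upper bound with a direct query comparing {\tt a105774} to {\tt phin}, and handles the lower bound by rewriting $\lfloor\tfrac{\varphi+2}{5}n\rfloor$ as $\lfloor(\lfloor\varphi n\rfloor+2n)/5\rfloor$ and using {\tt Walnut}'s integer division (the paper inlines {\tt x>=(y+2*n)/5} in the {\tt eval} rather than defining a separate synchronized automaton, a purely cosmetic difference). Your explicit justification of the floor identity and the suggested cross-check via {\tt phi2n} are sound but not needed beyond what the paper does.
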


\begin{proof}
For the first inequality we write
\begin{verbatim}
eval lowerbound "?msd_fib An,x,y ($a105774(n,x) & $phin(n,y)) => x>=(y+2*n)/5":
\end{verbatim}
For the second inequality we use the following {\tt Walnut} code:
\begin{verbatim}
eval upperbound "?msd_fib An,x,y ($a105774(n,x) & $phin(n,y)) => x<=y":
\end{verbatim}
and {\tt Walnut} returns {\tt TRUE}.
\end{proof}

The bounds in Proposition~\ref{prop1} are tight, as the following
theorem shows.

\begin{proposition}
We have $\liminf_{n \rightarrow \infty} a(n)/n = {{\varphi+2}\over 5}$
and $\limsup_{n \rightarrow \infty} a(n)/n = \varphi$.
\end{proposition}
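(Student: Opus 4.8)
The plan is to bootstrap from Proposition~\ref{prop1}. Dividing the inequalities $\lfloor \tfrac{\varphi+2}{5}n\rfloor \le a(n) \le \lfloor \varphi n\rfloor$ by $n$ and letting $n\to\infty$ immediately gives $\liminf_{n\to\infty} a(n)/n \ge \tfrac{\varphi+2}{5}$ and $\limsup_{n\to\infty} a(n)/n \le \varphi$. So it suffices to produce one subsequence of $(a(n)/n)_{n\ge 1}$ tending to $\tfrac{\varphi+2}{5}$ and another tending to $\varphi$; the natural candidates, suggested by the graph in Figure~\ref{fig22}, are $n = F_k$ and $n = F_k+1$, respectively.

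For the $\limsup$, I would take $n = F_k+1$ with $k\ge 2$. Since $F_k < F_k+1 \le F_{k+1}$, the defining recurrence~\eqref{eq1} unwinds in a single step: $a(F_k+1) = F_{k+1} - a(1) = F_{k+1}-1$. Hence $a(F_k+1)/(F_k+1) = (F_{k+1}-1)/(F_k+1) \to \varphi$ by the Binet form, so $\limsup_{n\to\infty} a(n)/n \ge \varphi$, which together with Proposition~\ref{prop1} forces equality. The identity $a(F_k+1) = F_{k+1}-1$ holds for every $k\ge 2$ and, if desired, can be re-checked in \texttt{Walnut}, since the set $\{F_k+1\}$ is Fibonacci-recognizable and consecutive Fibonacci numbers are linked by the synchronized predicate \texttt{adjfib} already defined above.

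For the $\liminf$, I would take $n = F_k$ with $k\ge 3$, so that $F_{k-1} < F_k \le F_k$ and~\eqref{eq1} gives $b_k := a(F_k) = F_k - a(F_k - F_{k-1}) = F_k - b_{k-2}$. Solving this inhomogeneous linear recurrence, its general solution is $b_k = \tfrac{1}{5}(3F_k + F_{k-1}) + h_k$, where $h_k$ satisfies $h_k = -h_{k-2}$ and is therefore periodic with period $4$, in particular bounded. Thus $b_k/F_k = \tfrac{1}{5}\bigl(3 + F_{k-1}/F_k\bigr) + O(1/F_k) \to \tfrac{1}{5}(3+\varphi^{-1}) = \tfrac{\varphi+2}{5}$, using $\varphi^{-1}=\varphi-1$; hence $\liminf_{n\to\infty} a(n)/n \le \tfrac{\varphi+2}{5}$, and equality follows.

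The easy parts are the two Binet-form limits. The one step requiring a little care is the $\liminf$ direction: one must recognize that the lower bound of Proposition~\ref{prop1} is attained precisely along $n=F_k$, solve and analyze the recurrence $b_k = F_k - b_{k-2}$, and mind the small-index base cases that legitimize applying~\eqref{eq1}. If one prefers to avoid the closed form, there is a slicker route: writing $L = \liminf_{n\to\infty}a(n)/n$ and assuming $L > \tfrac{\varphi+2}{5}$, one has $a(n)\ge(L-\varepsilon)n$ for all large $n$, whence $a(F_{k+1}) = F_{k+1} - a(F_{k-1}) \le F_{k+1} - (L-\varepsilon)F_{k-1}$; dividing by $F_{k+1}$, letting $k\to\infty$ (so $F_{k-1}/F_{k+1}\to\varphi^{-2}$) and then $\varepsilon\to 0$ yields $L \le 1 - L\varphi^{-2}$, i.e., $L \le \varphi^2/(\varphi^2+1) = \tfrac{\varphi+2}{5}$, a contradiction.
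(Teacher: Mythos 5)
Your proposal is correct, and its skeleton is the same as the paper's: use Proposition~\ref{prop1} for one direction of each bound, then exhibit a subsequence attaining each bound in the limit. For the $\limsup$ you even pick the same witness as the paper, $n=F_k+1$ with $a(F_k+1)=F_{k+1}-1$. The $\liminf$ half is where you genuinely diverge: the paper takes $n=L_k+1$ and reduces the claim to the single identity $a(L_k+1)=F_{k+1}+1$ for $k\ge 3$, which it verifies mechanically in {\tt Walnut} via a regular expression pairing $L_k$ with $F_{k+1}$; you instead take $n=F_k$, observe from \eqref{eq1} that $b_k:=a(F_k)$ satisfies $b_k=F_k-b_{k-2}$, and solve this recurrence by hand (particular solution $\tfrac15(3F_k+F_{k-1})$ plus a bounded $4$-periodic part), which correctly yields $b_k/F_k\to\tfrac15(3+\varphi^{-1})=\tfrac{\varphi+2}{5}$. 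Both witnesses converge to the same constant; your route has the virtue of being entirely elementary and self-contained (no automaton needed beyond \eqref{eq1}), while the paper's route fits its general methodology of reducing everything to a first-order statement checkable by the theorem-prover. Your closing ``slicker route'' --- feeding the a priori bound $a(n)\ge (L-\varepsilon)n$ back into $a(F_{k+1})=F_{k+1}-a(F_{k-1})$ to get $L\le 1-L\varphi^{-2}$, i.e., $L\le \varphi^2/(\varphi^2+1)=\tfrac{\varphi+2}{5}$ --- is a third valid argument that avoids solving the recurrence altogether (and in fact needs no contradiction: the inequality gives the bound directly).
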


\begin{proof}
For the first claim, using the well-known Binet formulas
for the Fibonacci and Lucas numbers, it suffices to show that
$a(L_k +1) = F_{k+1} + 1$ for all $k \geq 3$.
\begin{verbatim}
reg lucfib msd_fib msd_fib "[0,0]*[1,1][0,0][1,0][0,0]*":
# regular expression for the pair (L_k, F_{k+1}) for k>=3
eval chklow "?msd_fib Ax,y $lucfib(x,y) => $a105774(x+1,y+1)":
\end{verbatim}

For the second result it suffices to show that
$a(F_k+1) = F_{k+1} - 1$ for all $k \geq 2$.  This follows directly
from the defining recurrence for $a(n)$.  Or one can use {\tt Walnut}:
\begin{verbatim}
eval chkup "?msd_fib Ax,y,m ($adjfib(x,y) & $a105774(x+1,m)) => m+1=y":
\end{verbatim}
\end{proof}

The graph in Figure~\ref{fig22} suggests studying what the positions
of what might be called ``suffix minima":   those $n$ for which
$a(m)>a(n)$ for all $m > n$.
\begin{theorem}
The suffix minima of $(a(n))$ for $n > 0$ occur precisely when
$(n)_F \in 10(100^*10)^*0^*$.
\end{theorem}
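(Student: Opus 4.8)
The plan is to verify the stated characterization directly against the synchronized automaton {\tt a105774}, in the spirit of the guess-and-check method used throughout, except that the candidate description is already supplied, so only the checking step remains. First I would express ``$n$ is a suffix minimum'' as a first-order formula in $\langle\Enn,+,n\mapsto a(n)\rangle$: for $n>0$, this holds iff $a(m)>a(n)$ for every $m>n$. Since {\tt a105774} is synchronized and computes a function, I would define in {\tt Walnut}
\begin{verbatim}
def suffmin "?msd_fib n>0 & (Am,x,y (m>n & $a105774(n,x) & $a105774(m,y)) => y>x)":
\end{verbatim}
The universal quantifier over $m$ here is unbounded, but this is legitimate: because $a$ is synchronized, the first-order theory of $\langle\Enn,+,n\mapsto a(n)\rangle$ is decidable (see \cite{Shallit:2021h}), so {\tt Walnut} is guaranteed to build a finite automaton for {\tt suffmin}.

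Next I would encode the language $10(100^*10)^*0^*$ as a regular expression, prefixing a {\tt 0*} so that the automaton ignores leading zeros (the convention adopted for all our automata), and then reduce the theorem to a single equivalence check:
\begin{verbatim}
reg sm msd_fib "0*10(100*10)*0*":
eval checksm "?msd_fib An (n>0) => ($suffmin(n) <=> $sm(n))":
\end{verbatim}
Here one would report that {\tt Walnut} returns {\tt TRUE}, completing the proof. It is worth checking a few values by hand first: the suffix minima begin $n=2,3,5,8,13,20,21,31,32,\ldots$, with Fibonacci representations $10$, $100$, $1000$, $10000$, $100000$, $101010$, $1000000$, $1010010$, $1010100$, $\ldots$, all lying in $10(100^*10)^*0^*$; meanwhile nearby values such as $1,4,10,11,14,19$ are not suffix minima (for instance $a(1)=a(2)$, $a(4)=a(5)$, $a(10)>a(11)$, $a(14)=a(15)$, $a(19)=a(20)$), and their representations $1$, $101$, $10010$, $10100$, $100001$, $101001$ are correctly excluded.

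The main difficulty is computational rather than conceptual: evaluating {\tt suffmin} requires {\tt Walnut} to project out three quantified variables ($m$ and the two output values) and to complement, each step potentially triggering a subset construction, so the intermediate automata can be far larger than the small language that finally emerges. Should the direct evaluation prove too expensive, an equivalent plan is to first minimize the DFA produced by {\tt suffmin} and then test it against {\tt sm} with an equality-of-languages query. One may also note in passing that infinitely many suffix minima exist, which is immediate from $a(n)\to\infty$ together with the lower bound $a(n)\ge\lfloor\frac{\varphi+2}{5}n\rfloor$ of Proposition~\ref{prop1}.
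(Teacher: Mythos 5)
Your proposal is correct and takes essentially the same approach as the paper: both define the suffix-minimum predicate as a first-order formula over the synchronized automaton {\tt a105774} and let {\tt Walnut} produce the corresponding DFA. The only (minor, and arguably more rigorous) difference is that the paper reads the regular expression off the resulting automaton by inspection, whereas you verify it with an explicit {\tt reg}/{\tt eval} equivalence check.
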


\begin{proof}
We use the following {\tt Walnut} code:
\begin{verbatim}
def suffmin "?msd_fib Am,x,y (m>n & $a105774(m,x) & $a105774(n,y)) => x>y":
\end{verbatim}
The resulting automaton is depicted in Figure~\ref{fig25}, from 
which the regular expression is easily read off.
\begin{figure}[H]
\begin{center}
\includegraphics[width=6.5in]{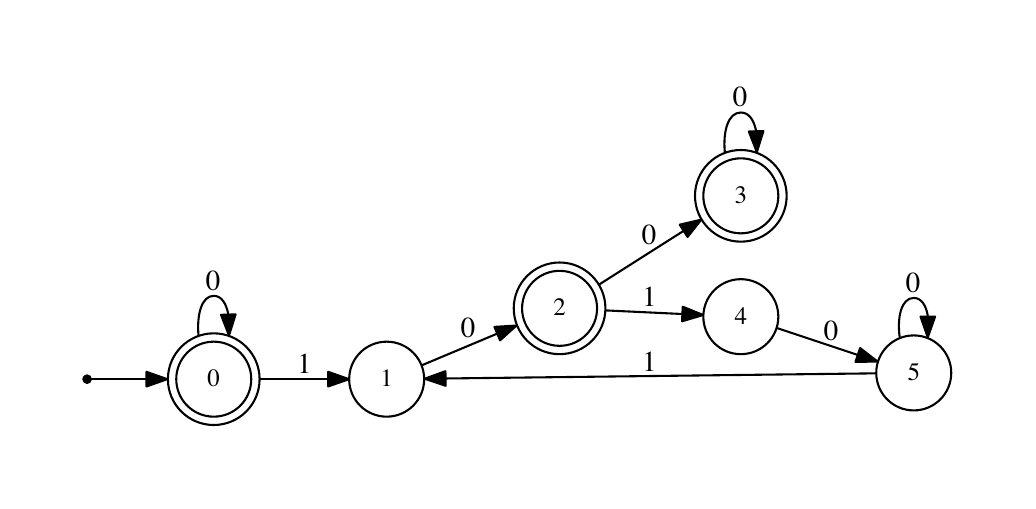}
\end{center}
\caption{Fibonacci representation of positions of suffix minima.}
\label{fig25}
\end{figure}
\end{proof}

\section{Consecutive identical or different terms}

\begin{proposition}
We have 
\begin{itemize}
\item[(a)]  $a(n) = a(n+1)$ if and only if 
$n \in \{ \lfloor \varphi^2 k \rfloor - 1 \suchthat k \geq 1 \}$;
\item[(b)] $a(n) \not= a(n+1)$ if and only if
$n \in \{ \lfloor \varphi k + 1/\varphi \rfloor \suchthat k \geq 0 \}$.
\item[(c)] $a(n) \not\in \{ a(n-1), a(n+1) \}$ if and only
if $n \in \{ \lfloor \varphi \lfloor \varphi^2 k \rfloor \rfloor \suchthat
k \geq 1 \}$.
\end{itemize}
\end{proposition}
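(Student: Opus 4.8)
The plan is to treat the three parts separately, in each case following the guess-and-check methodology used above: express the left-hand condition and the right-hand (Beatty-type) set as \texttt{Walnut} predicates built from \texttt{a105774}, \texttt{phin}, and \texttt{phi2n}, and then verify the equivalence with an \texttt{eval} command. For the left-hand sides, the condition $a(n)=a(n+1)$ of (a) is the predicate asserting the existence of a common value $x$ with $a105774(n,x)$ and $a105774(n+1,x)$; the condition $a(n)\neq a(n+1)$ of (b) is simply its negation; and the condition $a(n)\notin\{a(n-1),a(n+1)\}$ of (c) is the predicate asserting that the value $y$ at position $n$ differs both from the value at position $n-1$ and from the value at position $n+1$. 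One must remember that \texttt{Walnut}'s subtraction is truncated, so $0-1$ evaluates to $0$; this makes the $n=0$ case of (c) harmless, but it is cleanest to restrict that equivalence to $n\geq 1$ and note separately that $n=0$ lies in neither side.

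For the right-hand sides, only a little rewriting is needed so that each set can be phrased through the available synchronized automata. Part (a) is immediate: $n=\lfloor\varphi^2 k\rfloor-1$ if and only if $n+1=\lfloor\varphi^2 k\rfloor$, which \texttt{phi2n} expresses directly. For (b), using $1/\varphi=\varphi-1$ we get $\lfloor\varphi k+1/\varphi\rfloor=\lfloor\varphi(k+1)-1\rfloor=\lfloor\varphi(k+1)\rfloor-1$, so the set in (b) is exactly $\{\lfloor\varphi j\rfloor-1\suchthat j\geq 1\}$, expressible through \texttt{phin}. For (c) the two floors compose: $n=\lfloor\varphi\lfloor\varphi^2 k\rfloor\rfloor$ if and only if there is an $m$ with $m=\lfloor\varphi^2 k\rfloor$ and $n=\lfloor\varphi m\rfloor$, so we chain \texttt{phi2n} and \texttt{phin}; \texttt{Walnut} forms the resulting synchronized automaton by the usual product construction.

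The verification then consists of a few \texttt{def}s and three \texttt{eval} statements, which we expect to all return \texttt{TRUE}:
\begin{verbatim}
def eqnext "?msd_fib Ex $a105774(n,x) & $a105774(n+1,x)":
eval checka "?msd_fib An $eqnext(n) <=> (Ek (k>=1 & $phi2n(k,n+1)))":
eval checkb "?msd_fib An (~$eqnext(n)) <=> (Ej (j>=1 & $phin(j,n+1)))":
def isol "?msd_fib Ex,y,z $a105774(n-1,x) & $a105774(n,y) &
   $a105774(n+1,z) & y!=x & y!=z":
eval checkc "?msd_fib An (n>=1) => ($isol(n) <=> (Ek,m (k>=1
   & $phi2n(k,m) & $phin(m,n))))":
\end{verbatim}
As an independent sanity check on (b): after the common shift by $-1$, the sets in (a) and (b) are $\{\lfloor\varphi^2 k\rfloor\suchthat k\geq 1\}$ and $\{\lfloor\varphi j\rfloor\suchthat j\geq 1\}$, which partition $\Enn_{\geq 1}$ by Beatty's theorem since $1/\varphi+1/\varphi^2=1$; since $a(n)=a(n+1)$ and $a(n)\neq a(n+1)$ are complementary, (b) is in fact forced by (a), though the direct \texttt{Walnut} check is equally short.

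The step I expect to be the main obstacle is part (c). The difficulties there are, first, getting the nested floor exactly right --- the order in which \texttt{phi2n} and \texttt{phin} are composed, the $k\geq 1$ restriction, and the truncated subtraction at $n=0$ --- and, second, making sure the composed automaton stays small: we are intersecting \texttt{a105774} (in effect, three shifted copies of it) with the product of two synchronized Beatty automata, and any state blow-up would surface here. In practice all the constituent automata have few states, so the check should terminate quickly, but this is the point at which the argument could become delicate. A minor, purely bookkeeping concern is to keep the index origin ($k\geq 0$ versus $k\geq 1$) consistent across the three OEIS-style descriptions so that what \texttt{Walnut} certifies matches the proposition verbatim.
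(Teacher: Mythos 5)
Your proposal is correct and follows essentially the same route as the paper: each part is reduced to a single \texttt{Walnut} equivalence, with the Beatty-type sets rewritten so that (a) becomes $n+1=\lfloor\varphi^2 k\rfloor$ via \texttt{phi2n}, (b) becomes $n+1=\lfloor\varphi j\rfloor$ via \texttt{phin}, and (c) is handled by composing \texttt{phi2n} with \texttt{phin} and restricting to $n\geq 1$, exactly as the paper does with its \texttt{twoconsec}, \texttt{differ}, and \texttt{a003623}/\texttt{isolated} commands. Your side remark that (b) is forced by (a) via Beatty's theorem is a correct bonus observation not present in the paper.
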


\begin{proof}
\leavevmode
\begin{itemize}
\item[(a)]
We use the following {\tt Walnut} command:
\begin{verbatim}
eval twoconsec "?msd_fib An (Ex $a105774(n,x) & $a105774(n+1,x)) <=>
   (Ek,y (k>0) & $phi2n(k,y) & y=n+1)":
\end{verbatim}
\item[(b)]
We use the following {\tt Walnut} command:
\begin{verbatim}
eval differ "?msd_fib An (Ex,y $a105774(n,x) & $a105774(n+1,y) & 
   x!=y) <=> (Ek,y $phin(k+1,y) & y=n+1)":
\end{verbatim}
\item[(c)]
We use the following {\tt Walnut} commands:
\begin{verbatim}
def a003623 "?msd_fib Ex $phi2n(n,x) & $phin(x,z)":
eval isolated "?msd_fib An (n>0) => ((Ex,y,z $a105774(n-1,x) & 
   $a105774(n,y) & $a105774(n+1,z) & x!=y & y!=z) <=>
   (Ek $a003623(k,n)))":
\end{verbatim}
\end{itemize}
\end{proof}

The automaton computing \seqnum{A003623} is given in Figure~\ref{fig5}.
\begin{figure}[H]
\begin{center}
\includegraphics[width=6.5in]{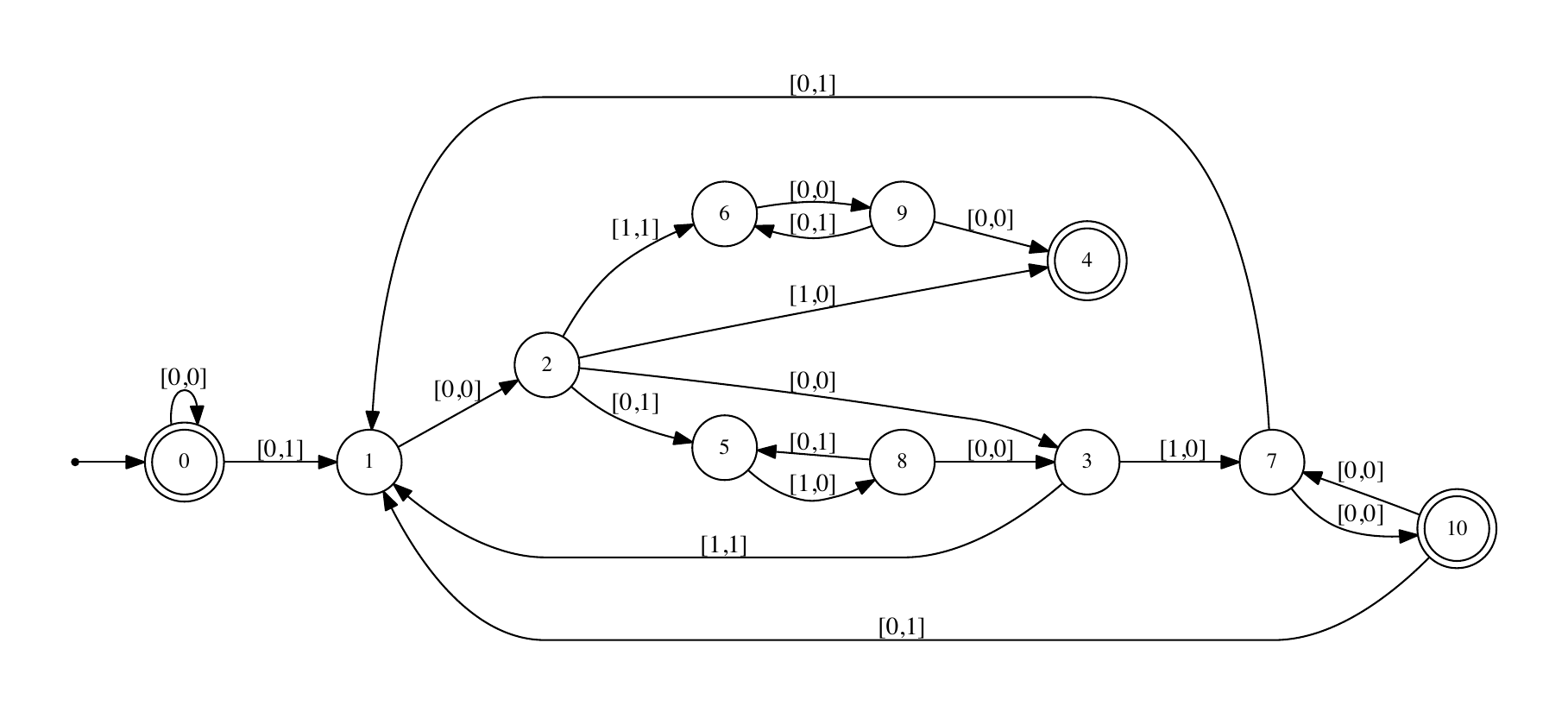}
\end{center}
\caption{Synchronized Fibonacci DFAO for \seqnum{A003623}.}
\label{fig5}
\end{figure}

\section{Sorting the terms}

We can consider sorting the terms of \seqnum{A105774} in ascending order.
The resulting sequence is \seqnum{A368200}.  We can now guess a Fibonacci
automaton for this sequence; it is displayed in Figure~\ref{fig6}.
\begin{figure}[H]
\begin{center}
\includegraphics[width=6.5in]{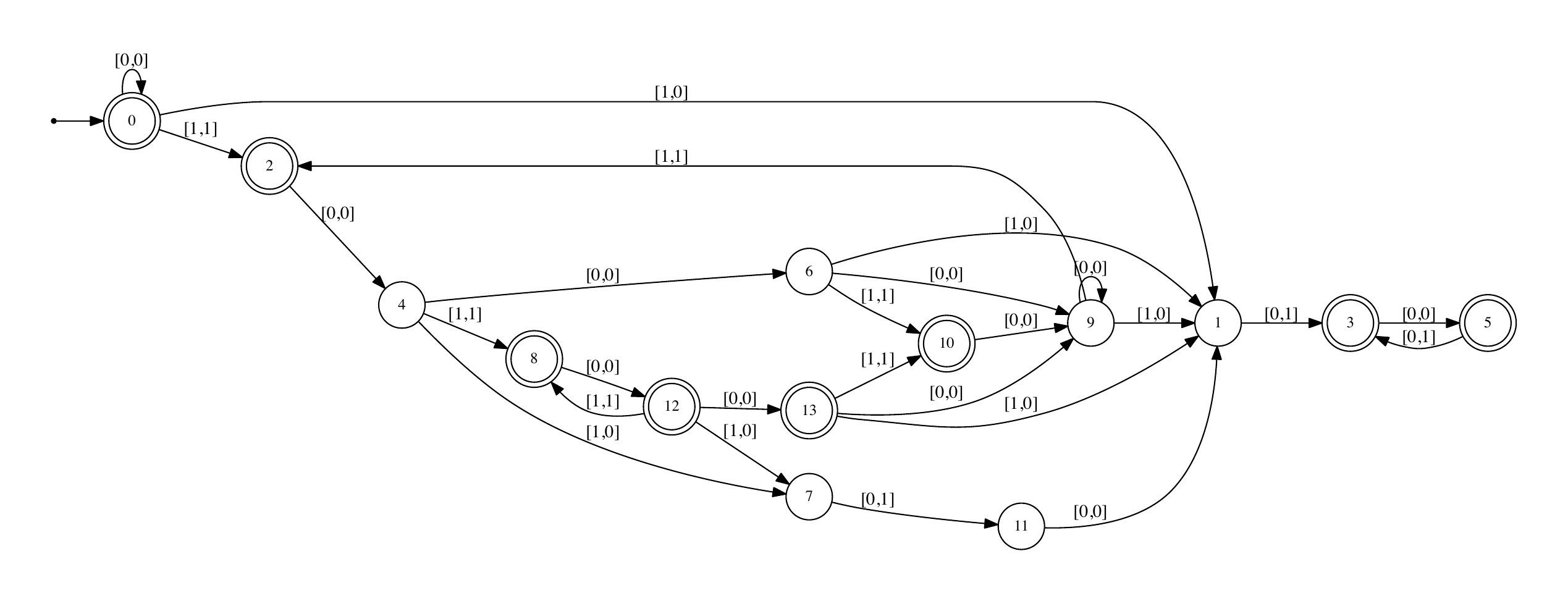}
\end{center}
\caption{Synchronized Fibonacci DFAO for \seqnum{A368200}.}
\label{fig6}
\end{figure}
It now remains to verify that this is indeed \seqnum{A105774} in ascending
order.  This is a consequence of a more general (and new) result, as
follows.
\begin{theorem}
Suppose $(a(n))_{n \geq 0}$ and $(b(n))_{n \geq 0}$
are synchronized sequences.  Then the following property is
decidable:   $(a(n))_{n \geq 0}$ is a permutation of $(b(n))_{n \geq 0}$ .
\label{new1}
\end{theorem}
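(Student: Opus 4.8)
The plan is to reduce the statement to a handful of checks, each decidable by the machinery already set up. Put $c_a(v) = |\{n \suchthat a(n) = v\}|$ and $c_b(v) = |\{n \suchthat b(n) = v\}|$, with values in $\Enn \cup \{\infty\}$. A routine set-theoretic argument shows that $(a(n))_{n\ge0}$ is a permutation of $(b(n))_{n\ge0}$ if and only if $c_a(v) = c_b(v)$ for every $v$: one direction is immediate, and for the other one partitions $\Enn$ into the fibers $a^{-1}(v)$, does the same for $b$, and glues together bijections between corresponding fibers, which exist precisely because the fibers have equal cardinality (finite or infinite). So it suffices to decide whether $c_a \equiv c_b$, and I would do this by treating infinite and finite fibers separately.

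The set $S_a = \{v \suchthat c_a(v) = \infty\}$ is defined by the first-order formula $\forall m\, \exists n\,(n > m \wedge a(n) = v)$ over the synchronized structure, so it is an effectively computable regular set, and likewise $S_b$; hence the (clearly necessary) equality $S_a = S_b$ is decidable. Assuming $S_a = S_b$, what remains is to decide whether the two partial functions $c_a$ and $c_b$, restricted to the complement of $S_a$ — where they take finite values — have the same domain and the same values. The crucial point is that $c_a$, restricted to the set of $v$ with $c_a(v)$ finite, is itself a \emph{synchronized} function of $v$ (and similarly for $c_b$); once this is known, ``$c_a$ and $c_b$ agree on the finite part'' is again a first-order statement over a structure assembled from two synchronized relations, hence decidable. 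If that check and the check $S_a = S_b$ both succeed then $c_a(v) = c_b(v)$ for all $v$ and the sequences are permutations of each other; if either fails, they are not.

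The main obstacle is therefore the lemma that the fiber-counting function of a synchronized sequence is synchronized wherever it is finite. Note that this cannot be replaced by the cruder claim that finite fibers are uniformly bounded in size: the synchronized function $f$ with $f(y)$ equal to the largest Fibonacci number not exceeding $y$ has $f^{-1}(F_j) = \{F_j, F_j+1, \ldots, F_{j+1}-1\}$, of size $F_{j-1}$, which is finite but unbounded. To prove the lemma I would fix a synchronized automaton $M$ for $a$ and first show, by a pumping argument on $M$ — pump a state-cycle occurring in the portion of the run where the second coordinate consists only of padding zeros; a cycle that could be iterated would produce infinitely many $n$ with $a(n)=v$ — that whenever $c_a(v)$ is finite, every $n$ with $a(n) = v$ has Fibonacci representation of length at most $|(v)_F| + C$ for a constant $C$ depending only on $M$; in particular $c_a(v) = O(v)$. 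With this bound in hand, computing $c_a(v)$ for a fixed $v$ reduces to counting accepting runs of a bounded ``tail'' of $M$ that reads the digits of $v$ alongside, which exhibits $c_a$ as the output of a weighted automaton over $\Enn$ reading $(v)_F$, with output bounded by $\varphi^{|(v)_F| + C}$. From this controlled growth one builds a synchronized automaton verifying $x = c_a(v)$, reading $(v)_F$ and $(x)_F$ in parallel and maintaining the partial count in Fibonacci representation digit by digit; arranging for a single finite-state device to keep the running count aligned with the digits of $v$ and $x$ is the delicate step. Granting the lemma, Theorem~\ref{new1} follows as above; in particular, applied to \seqnum{A105774} and the guessed automaton of Figure~\ref{fig6}, together with the easy check that the latter computes a nondecreasing sequence, it confirms that Figure~\ref{fig6} indeed computes \seqnum{A105774} sorted into ascending order.
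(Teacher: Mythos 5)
Your reduction to comparing fiber cardinalities is correct, your first-order treatment of the infinite fibers is sound (and more explicit than the paper's sketch on that point), and the pumping argument bounding $|(n)_F|$ by $|(v)_F|+C$ when the fiber of $v$ is finite is fine. The proof breaks at exactly the step you flag as delicate: the lemma that $c_a$, restricted to the values with finite fiber, is a \emph{synchronized} function of $v$ is false, so no construction can produce the synchronized automaton for $x=c_a(v)$. Counterexample: let $a(0)=0$ and, for $n\geq 1$ with Zeckendorf expansion $n=F_{i_1}+\cdots+F_{i_r}$, $i_1>\cdots>i_r\geq 2$, let $a(n)=n-F_{i_r}$. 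The relation $\{(n,a(n))\}$ is Fibonacci-synchronized (the two tracks agree except that the trailing $1$ of $(n)_F$ is replaced by $0$), and $a$ is total. The fiber of $0$ is $\{0\}\cup\{F_j : j\geq 2\}$, which is infinite; for $v\neq 0$ with smallest summand $F_{m+2}$, the fiber is $\{v+F_{i+2} : 0\leq i\leq m-2\}$, of size $\max(m-1,0)$. So $c_a$ is finite off the fiber of $0$, unbounded, and $O(\log v)$; but a synchronized function that is $o(v)$ must be $O(1)$ \cite{Shallit:2021h}, so this $c_a$ is not synchronized. In particular, $O(v)$ growth of a counting function is nowhere near sufficient for synchronizability, and your plan to ``maintain the running count in Fibonacci representation alongside $v$ and $x$'' cannot be carried out.

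The repair is essentially the paper's proof: stop at the intermediate object you already built and do not convert it to a synchronized automaton. The truncated fiber count (over $n$ with $|(n)_F|\leq |(v)_F|+C$, which by your pumping argument equals the full count whenever that count is finite) is computed by a \emph{linear representation} reading $(v)_F$: use transfer matrices whose $(p,q)$ entry is the number of first-track symbols carrying state $p$ to state $q$. Thus fiber-counting functions of synchronized sequences are Fibonacci-regular, though not in general synchronized. The difference of the two linear representations for $a$ and $b$ is a $\mathbb{Z}$-linear representation, and whether a linear representation is identically zero is decidable by the standard dimension argument. Combined with your (correct and necessary) separate check that the sets of infinitely-repeated values coincide, this gives the theorem.
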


\begin{proof}
We just sketch the proof, which depends on linear representations.
For more about these, see \cite[Chap.~9]{Shallit:2022} and
\cite{Berstel&Reutenauer:2011}.
Given automata for $(a(n))_{n \geq 0}$ and $(b(n))_{n \geq 0}$,
we can find linear representations counting the number of occurrences
of each natural number $n$ in each sequence.   Then we can form the
linear representation for the difference of these two.   Then we
use the fact that whether a linear representation is identically $0$
is decidable.
\end{proof}

When we perform this calculation
for the sequences \seqnum{A105774} and \seqnum{A368200},
we find that the linear representation for the difference is indeed
zero. It now simply remains to check
that the numbers computed by the automaton are in ascending order:
\begin{verbatim}
eval ascending "?msd_fib An,x,y ($a368200(n,x) & $a368200(n+1,y)) => y >= x":
\end{verbatim}

\begin{proposition}
Letting $b(n) = \seqnum{A368200}(n)$, we have
$b(n+1)-b(n) \in \{0,1,2 \}$.
\end{proposition}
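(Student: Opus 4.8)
The plan is to reuse the synchronized automaton \texttt{a368200} that has already been guessed and verified (as a permutation of \seqnum{A105774} via Theorem~\ref{new1}, and as ascending via the \texttt{ascending} check) to compute $b(n)$. Since that \texttt{ascending} check already established $b(n+1) \geq b(n)$ for all $n$, the only thing left to prove is $b(n+1) \leq b(n) + 2$. This is a single first-order statement about a synchronized sequence, so it can be settled directly in \texttt{Walnut}:
\begin{verbatim}
eval diff_small "?msd_fib An,x,y ($a368200(n,x) & $a368200(n+1,y)) => y<=x+2":
\end{verbatim}
If \texttt{Walnut} returns \texttt{TRUE}, then combining with the ascending property yields $b(n+1) - b(n) \in \{0,1,2\}$, as claimed.

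It is worth recording why this holds structurally, since the argument also explains which of the three values occurs. Recall that $c(v)$, the number of occurrences of $v$ in \seqnum{A105774}, always lies in $\{0,1,2\}$, and that the values $v$ with $c(v)=0$ are exactly the numbers $\lfloor \varphi^2 k + 1/2 \rfloor$ for $k \geq 1$. Because $2 < \varphi^2 < 3$, consecutive terms of that Beatty-type sequence differ by $2$ or $3$; hence no two consecutive positive integers are simultaneously absent from \seqnum{A105774} (this could also be checked directly with \texttt{eval no2two "?msd\_fib \char'176Em (m>0 \& \$s0(m) \& \$s0(m+1))":}). Now, since $b$ is the ascending rearrangement of \seqnum{A105774}, if $b(n)=v$ then either $b(n+1)=v$ (when $c(v)=2$ and $n$ indexes the first of the two occurrences), or $b(n+1)$ is the least present value exceeding $v$, which by the previous sentence is $v+1$ or $v+2$. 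In every case $0 \le b(n+1)-b(n) \le 2$.

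I do not expect a real obstacle here: the hard work — guessing and verifying \texttt{a368200}, and proving it is \seqnum{A105774} sorted — has already been done, so what remains is exactly the kind of bounded-difference query \texttt{Walnut} disposes of automatically. The only point requiring a little care is the boundary at $n=0$ (note $b(0)=a(0)=0$), which the universally quantified statement handles without special treatment.
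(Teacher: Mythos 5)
Your proof is correct and matches the paper's approach: the paper likewise settles this with a single {\tt Walnut} query on the verified {\tt a368200} automaton (defining {\tt diff} via {\tt y=x+z} and checking {\tt z=0|z=1|z=2}), which is equivalent to your combination of the {\tt ascending} check with the bound {\tt y<=x+2}. Your added structural explanation via $c(v)$ and the gaps in \seqnum{A004937} is a nice bonus but not needed.
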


\begin{proof}
We use the following {\tt Walnut} code.
\begin{verbatim}
def diff "?msd_fib Ex,y $a368200(n,x) & $a368200(n+1,y) & y=x+z":
def checkdiff "?msd_fib An,z $diff(n,z) => (z=0|z=1|z=2)":
\end{verbatim}
\end{proof}

It now follows that the first difference sequence $(d(n))_{n \geq 0}$
defined by $d(n) = b(n+1)-b(n)$ is Fibonacci-automatic.  
In fact, it is $c(n)$ in disguise, as the next theorem shows.
\begin{theorem}
For $n \geq 0$ we have $d(n) = 2-c(n)$.
\end{theorem}

\begin{proof}
We use the following {\tt Walnut} commands:
\begin{verbatim}
eval cd0 "?msd_fib An $diff(n,0) <=> C[n]=@2":
eval cd1 "?msd_fib An $diff(n,1) <=> C[n]=@1":
eval cd1 "?msd_fib An $diff(n,2) <=> C[n]=@0":
\end{verbatim}
and {\tt Walnut} returns {\tt TRUE} each time.
\end{proof}

\section{Special values}

In this section we compute the values of $a(n)$ when $n$ is a Fibonacci
or Lucas number.

\begin{theorem}
Let $s(n) = a(F_n)$.
\begin{itemize}
\item[(a)]  The Fibonacci representation of 
$s(n)$ for $n \geq 3$ is
$(1000)^{\lfloor (n-3)/4 \rfloor} x_n$ where 
$$x_n = \begin{cases}
	10, & \text{if $n \equiv \modd{0} {4}$;} \\
	101, & \text{if $n \equiv \modd{1} {4}$;} \\
	1001, & \text{if $n \equiv \modd{2} {4}$;}\\
	1, & \text{if $n\equiv \modd{3} {4}$.} 
	\end{cases} $$
\item[(b)]  $(s(n-1))$ is the OEIS sequence
\seqnum{A006498}, satisfying the recurrence $s(n) = s(n-1)+s(n-3)+s(n-4)$
for $n\geq 4$.
A closed form is as follows:
$$ s(n) = L_n/10 + F_n/2 + \begin{cases}
	-{1\over 5}(-1)^{n/2}, & \text{$n$ even;} \\
	{2\over 5}(-1)^{(n-1)/2}, & \text{$n$ odd.}
	\end{cases}$$
\end{itemize}
\label{fibb}
\end{theorem}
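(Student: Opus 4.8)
The plan is to reduce both parts to a single elementary observation about $s(n)=a(F_n)$. Applying the defining recurrence~\eqref{eq1} to the argument $F_n$ with $j=n-1$ — legitimate for $n\ge 3$, since $F_{n-1}<F_n\le F_n$ and $n-1\ge 2$ — gives
$$ s(n)=a(F_n)=F_n-a(F_n-F_{n-1})=F_n-a(F_{n-2})=F_n-s(n-2). $$
(This can also be confirmed in {\tt Walnut}: the pairs $(F_k,F_{k-2})$ form a regular set, and one checks with {\tt a105774} that $a(F_k)+a(F_{k-2})=F_k$.) From this recurrence together with $s(0)=0$ and $s(1)=s(2)=1$, a one-line induction yields $F_{n-1}\le s(n)<F_n$ for all $n\ge 3$; in particular $(s(n))_F$ has length exactly $n-2$. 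Substituting the recurrence into itself also gives $s(n)=F_{n-1}+s(n-4)$ for $n\ge 5$.

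For part~(a) I would induct on $n$ in steps of four. The base cases $n=3,4,5,6$, where $(s(n))_F=1,\ 10,\ 101,\ 1001$, are read off Table~\ref{tab1}. For the inductive step, $(s(n-4))_F$ has length $(n-4)-2=n-6$, so prepending the block $1000$ produces a legal Zeckendorf word (the junction is $\cdots0\,|\,1\cdots$) of length $n-2$ whose leading $1$ lies in position $F_{n-1}$; hence its value is $F_{n-1}+s(n-4)=s(n)$. Since $n\equiv n-4\pmod 4$ the suffix $x_n$ is unchanged and the exponent of $1000$ increases by one, which is exactly the relation $\lfloor(n-3)/4\rfloor=\lfloor(n-7)/4\rfloor+1$. (Alternatively, since {\tt a105774} is synchronized, one could intersect the set of accepted pairs $(N,a(N))$ with the regular condition ``$N$ is a Fibonacci number'' and let {\tt Walnut} verify that the resulting regular language of pairs coincides with the one described by the formula; the hand induction is shorter.)

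For part~(b), from $s(n)=F_n-s(n-2)$ and $s(n-1)=F_{n-1}-s(n-3)$ we get $s(n)-s(n-1)=F_{n-2}-s(n-2)+s(n-3)=s(n-4)+s(n-3)$, the last step using $s(n-2)=F_{n-2}-s(n-4)$; this proves $s(n)=s(n-1)+s(n-3)+s(n-4)$ for $n\ge 5$, and $n=4$ is checked by hand. Conceptually, $s$ is annihilated by $(x^2+1)(x^2-x-1)=x^4-x^3-x-1$, the product of the characteristic polynomial $x^2+1$ of the homogeneous part of $s(n)=-s(n-2)+F_n$ and the polynomial $x^2-x-1$ annihilating the forcing sequence $(F_n)$. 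For the closed form, put $g(n)=L_n/10+F_n/2$; using the identities $L_n+L_{n-2}=5F_{n-1}$ and $F_n+F_{n-2}=L_{n-1}$ one checks that $g(n)+g(n-2)=F_n$ exactly, so $\varepsilon(n):=s(n)-g(n)$ satisfies $\varepsilon(n)=-\varepsilon(n-2)$, whence $\varepsilon$ has period $4$ and is determined by $\varepsilon(0),\varepsilon(1)$; evaluating $g$ at $0$ and $1$ pins $\varepsilon$ down to the stated period-$4$ pattern. The identification with \seqnum{A006498} is then the remark that \seqnum{A006498} obeys the same four-term recurrence and a handful of initial terms agree with $(s(n))_{n\ge 1}$.

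The only fiddly part is~(a): keeping the mod-$4$ bookkeeping of the floor functions straight, and confirming that $(s(m))_F$ really does have length exactly $m-2$ for every $m\ge 3$, so that prepending precisely the $4$-bit block $1000$ places the leading Fibonacci index correctly. Once $s(n)=F_n-s(n-2)$ is in hand, part~(b) is routine constant-coefficient-recurrence bookkeeping.
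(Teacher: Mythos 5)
Your proof is correct, but it takes a genuinely different route from the paper. The paper handles part (a) by building a synchronized automaton accepting the pairs $(F_n, s(n))$ (intersecting {\tt a105774} with the regular set $0^*10^*$ of Fibonacci numbers) and reading the representation off the resulting transition diagram, and handles part (b) by having {\tt Walnut} verify the four-term recurrence on a regular set of quadruples $(F_{n+4},F_{n+3},F_{n+1},F_n)$ --- exactly the mechanized alternative you mention parenthetically. You instead extract the single identity $s(n)=F_n-s(n-2)$ directly from the defining recurrence \eqref{eq1} (valid since $F_{n-1}<F_n\le F_n$ for $n\ge 3$), and everything else --- the length bound $F_{n-1}\le s(n)<F_n$, the prepending of the block $1000$, the four-term recurrence, and the closed form via $g(n)+g(n-2)=F_n$ --- follows by elementary induction. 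Your approach buys a self-contained, computer-free proof and, unlike the paper, actually derives the closed form rather than leaving it implicit in the recurrence-plus-initial-conditions; the paper's approach buys uniformity with the rest of its machinery and avoids the mod-$4$ bookkeeping. One small point to tidy: the recurrence $s(n)=F_n-s(n-2)$ is only justified for $n\ge 3$, so to get the stated period-$4$ form of $\varepsilon(n)=s(n)-g(n)$ starting from $\varepsilon(0),\varepsilon(1)$ you should also note the (easily checked) identity $s(2)=F_2-s(0)$, or equivalently verify the closed form directly at $n=0,1,2,3$ and propagate with $\varepsilon(n)=-\varepsilon(n-2)$ for $n\ge 4$.
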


\begin{proof}
For part (a) we create a synchronized automaton that
accepts the Fibonacci representation of $F_n$,
which is $1 0^{n-2}$, in parallel with $s(n)$.
It is displayed in Figure~\ref{fig8}, from which
the result immediately follows by inspection.
\begin{verbatim}
reg isfib msd_fib "0*10*":
def special "?msd_fib $isfib(x) & $a105774(x,y)":
\end{verbatim}
\begin{figure}[H]
\begin{center}
\includegraphics[width=6.5in]{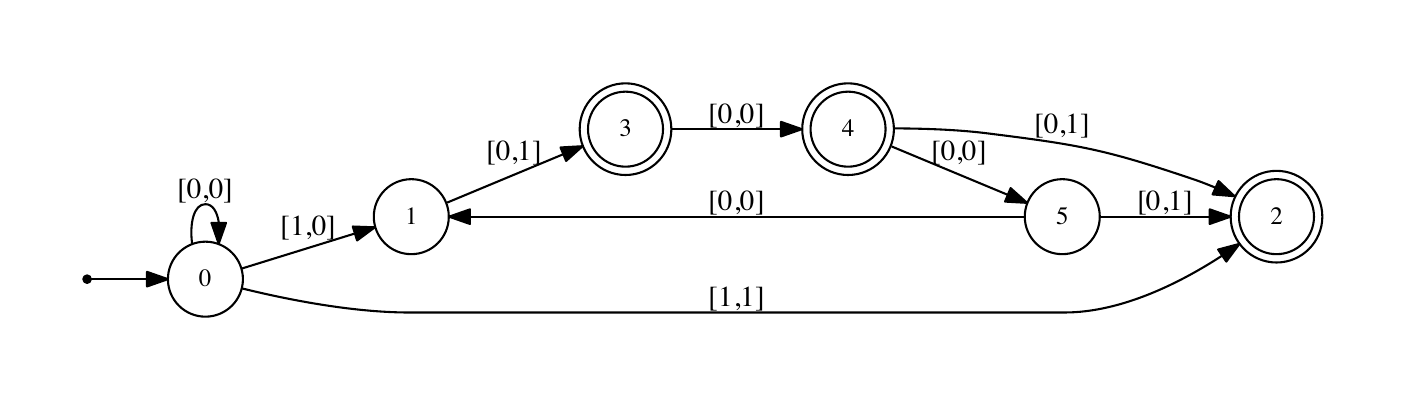}
\end{center}
\caption{Synchronized Fibonacci DFAO for $s(n)$.}
\label{fig8}
\end{figure}

For part (b) we first create a DFA that accepts, in
parallel, the Fibonacci representations of
$F_{n+4}, F_{n+3}, F_{n+1}, F_n$.
Then we check the recurrence.
\begin{verbatim}
reg four msd_fib msd_fib msd_fib msd_fib 
   "[0,0,0,0]*[1,0,0,0][0,1,0,0][0,0,0,0][0,0,1,0][0,0,0,1][0,0,0,0]*":
eval partb "?msd_fib Aa,b,c,d,x,y,z,w ($four(a,b,c,d) & $a105774(a,x) &
   $a105774(b,y) & $a105774(c,z) & $a105774(d,w)) => x=y+z+w":
\end{verbatim}
and {\tt Walnut} returns {\tt TRUE}.
\end{proof}

\begin{proposition}
\leavevmode
\begin{itemize}
\item[(a)] Suppose $k \geq 3$.
Over the range $F_k \leq n < F_{k+1}$, the
sequence $a(n)$ achieves its minimum uniquely at $n = F_k$.
\item[(b)] Suppose $k \geq 5$.  Over the range
$F_k \leq n < F_{k+1}$, the sequence
$a(n)$ achieves its maximum only at $n=F_k+1$ and $n=F_k+2$.
\end{itemize}
\end{proposition}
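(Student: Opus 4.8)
The plan is to attack both parts (a) and (b) directly with {\tt Walnut}, following the now-standard pattern used throughout this paper: express the claim as a first-order sentence over $\langle \Enn, +, n \mapsto a(n)\rangle$, using the synchronized automaton {\tt a105774} together with the regular expression {\tt adjfib} already defined to capture the relation ``$x$ and $y$ are consecutive Fibonacci numbers.'' Since {\tt adjfib} accepts pairs $(F_k, F_{k+1})$, the condition $F_k \le n < F_{k+1}$ is just {\tt adjfib(p,q) \& p<=n \& n<q} for suitable quantified $p,q$. The only subtlety is the lower bound on $k$: part (a) needs $k \ge 3$ and part (b) needs $k \ge 5$, which correspond to restricting {\tt adjfib} to accept only sufficiently long inputs (equivalently, $F_k \ge 2$ for (a) and $F_k \ge 5$ for (b)); this can be enforced by adding a clause like $p \ge 2$ or $p \ge 5$, or by intersecting with an explicit regular expression that rules out the short cases.

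For part (a) I would write, schematically,
\begin{verbatim}
eval minatfib "?msd_fib Ap,q,n ($adjfib(p,q) & p>=2 & p<=n & n<q & n!=p) =>
   (Ex,y $a105774(p,x) & $a105774(n,y) & y>x)":
\end{verbatim}
asserting that for every $n$ strictly inside the block $[F_k, F_{k+1})$ with $n \neq F_k$, the value $a(n)$ strictly exceeds $a(F_k)$; {\tt Walnut} should return {\tt TRUE}. For part (b), I would similarly assert that for all $n$ in $[F_k, F_{k+1})$ with $n \notin \{F_k+1, F_k+2\}$, we have $a(n) < a(F_k+1)$ (and, since $a(F_k+1)=a(F_k+2)=F_{k+1}-1$ by the recurrence and the remark following the $\limsup$ proposition, also the equality $a(F_k+1)=a(F_k+2)$ can be tossed in or checked separately):
\begin{verbatim}
eval maxatfib "?msd_fib Ap,q,n ($adjfib(p,q) & p>=5 & p<=n & n<q &
   n!=p+1 & n!=p+2) => (Ex,y $a105774(p+1,x) & $a105774(n,y) & y<x)":
\end{verbatim}
again expecting {\tt TRUE}. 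One should double-check that $p+1$ and $p+2$ still lie in $[F_k,F_{k+1})$ when $k \ge 5$, which is immediate since $F_5 = 5 > 3$.

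The main obstacle, as usual with this method, is not the logic but the empirical calibration of the side conditions: getting the exact threshold ($k\ge3$ versus $k\ge5$) right so that {\tt Walnut} returns {\tt TRUE} rather than exhibiting a small counterexample. For small $k$ the blocks $[F_k,F_{k+1})$ are tiny (e.g.\ $[F_3,F_4)=[2,3)=\{2\}$, $[F_4,F_5)=[3,5)=\{3,4\}$), so the ``uniqueness'' and ``only at two points'' phrasing degenerates, which is exactly why the hypotheses $k \ge 3$ and $k \ge 5$ appear; the proof consists in pinning these down and letting the decision procedure certify the rest. If desired, one can also replace the plain inequality {\tt p>=5} by intersecting {\tt adjfib} with a hand-built regular expression forbidding the finitely many short representations, which makes the statement cleaner but is logically equivalent. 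Since each sentence has no free variables and involves only $+$, comparisons, and the verified synchronized automaton {\tt a105774}, a {\tt TRUE} answer from {\tt Walnut} constitutes a rigorous proof of the proposition.
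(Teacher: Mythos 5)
Your proposal is correct and takes essentially the same approach as the paper's proof: both encode the claim as a first-order sentence built from the {\tt adjfib} relation and the verified synchronized automaton {\tt a105774}, restrict to the appropriate range of $k$, and let {\tt Walnut} certify it. The paper's queries ({\tt minval}, {\tt maxval}) differ from yours only cosmetically---universal rather than existential placement of the automaton calls, and (for part (b)) asserting $z=w$ together with $u\leq z$ rather than your strict inequality at the non-excluded points.
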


\begin{proof}
We use the following {\tt Walnut} code:
\begin{verbatim}
eval minval "?msd_fib Ax,y,z,t,u ($adjfib(x,y) & $a105774(x,z) &
   t>x & t<y & $a105774(t,u)) => u>z":
eval maxval "?msd_fib Ax,y,z,t,u,w (x>=5 & $adjfib(x,y) & $a105774(x+1,z) &
$a105774(x+2,w) & t>=x & t<y & $a105774(t,u)) => (z=w & u<=z)":
\end{verbatim}
and {\tt Walnut} returns {\tt TRUE} for both.
\end{proof}

\begin{corollary}
Suppose $F_k \leq n < F_{k+1}$.   Then
$s(k) \leq a(n) \leq F_{k+1} -1$.
\end{corollary}

\begin{theorem}
Define $t(n) = a(L_n)$ for $n \geq 0$.   Then
$t(n) = t(n-1)+t(n-3)+t(n-4)$ for $n \geq 5$.
A closed form for $n\geq 2$ is given by
$$ t(n) = 3L_n/10 + 3F_n/2 + \begin{cases}
	{2\over 5}(-1)^{n/2}, & \text{$n$ even;} \\
	{1\over 5}(-1)^{(n-1)/2}, & \text{$n$ odd.}
	\end{cases}$$
\end{theorem}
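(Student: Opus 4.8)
The plan is to follow the strategy already used in the proof of Theorem~\ref{fibb}, now with Lucas numbers in place of Fibonacci numbers. First I would record the Zeckendorf representation of $L_n$: since $L_n = F_{n+1} + F_{n-1}$ and the indices $n+1$, $n-1$ are nonconsecutive, we have $(L_n)_F = 101\,0^{\,n-3}$ for $n \geq 3$, while $(L_1)_F = 1$ and $(L_2)_F = 100$ are exceptional. Thus the set $\{ L_n \suchthat n \geq 3 \}$ is described by the regular expression $0^*1010^*$, and, more to the point, the four Zeckendorf representations of $L_{n+4}, L_{n+3}, L_{n+1}, L_n$, read in parallel and right-aligned, form a single regular language over the $4$-track alphabet, just as the macro \texttt{four} encodes the Fibonacci analogue in the proof of Theorem~\ref{fibb}(b).

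To prove the recurrence I would then run a \texttt{Walnut} query modelled on \texttt{partb}: for every $4$-tuple $(p,q,r,s)$ matching this Lucas pattern, assert that $a(p) = a(q) + a(r) + a(s)$, with the values of $a$ supplied by the synchronized automaton \texttt{a105774}. \texttt{Walnut} should return \texttt{TRUE}, which proves $t(n) = t(n-1)+t(n-3)+t(n-4)$ for all $n$ large enough that the four indices involved are at least $3$. The handful of remaining small values of $n$ — those that reach back to the exceptional representations of $L_1$ or $L_2$ — I would check directly against the table of $a(n)$; this also determines the precise threshold from which the recurrence holds.

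For the closed form, observe that the characteristic polynomial of the recurrence factors as
$$ X^4 - X^3 - X - 1 = (X^2 - X - 1)(X^2 + 1), $$
so its roots are $\varphi$, $-1/\varphi$, $i$, and $-i$. Hence every real solution has the form $\alpha L_n + \beta F_n + p(n)$, where $p$ is a real sequence of period $4$ contributed by the roots $\pm i$. Since any linear combination of $(F_n)$ and $(L_n)$ satisfies the recurrence, and the period-$4$ sequence taking the values $2/5,\,1/5,\,-2/5,\,-1/5$ on the residues $0,1,2,3 \pmod 4$ does too, it suffices to match the four initial values $t(2), t(3), t(4), t(5)$ — read off from the table, or from the automaton \texttt{special} rebuilt with $0^*1010^*$ in place of $0^*10^*$ — and then to rewrite $\alpha L_n + \beta F_n + p(n)$ in the displayed form using the Binet formulas.

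The main obstacle is the same boundary bookkeeping that already appears in Theorem~\ref{fibb}: the uniform pattern $(L_n)_F = 101\,0^{\,n-3}$ holds only for $n \geq 3$ and the closed form only for $n \geq 2$, so the first few indices must be handled separately, and one must be careful about exactly where the four-term recurrence begins to hold. Once those edge cases are pinned down, both the \texttt{Walnut} verification and the short linear-algebra computation yielding the closed form are entirely routine.
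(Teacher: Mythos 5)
Your proposal is correct and follows exactly the route the paper intends: the paper's own proof is simply ``Same ideas as in Theorem~\ref{fibb}; we omit the details,'' and you have supplied those details faithfully --- the Zeckendorf pattern $(L_n)_F = 101\,0^{n-3}$, the four-track regular expression and \texttt{Walnut} check modelled on \texttt{partb}, and the standard solution of the linear recurrence via the factorization $X^4-X^3-X-1=(X^2-X-1)(X^2+1)$. Your attention to the exceptional small indices ($L_0, L_1, L_2$ not fitting the uniform pattern, hence the need to check $n=5,6$ by hand) is exactly the boundary care the omitted argument requires.
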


\begin{proof}
Same ideas as in Theorem~\ref{fibb}.  We omit the details.
\end{proof}

\section{Parity}

\begin{proposition}
We have $a(n) \equiv \modd{\lfloor n \varphi \rfloor} {2}$ for
$n \geq 0$.
\end{proposition}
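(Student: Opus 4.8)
The plan is to settle this with a single \texttt{Walnut} query, in exactly the style of the identities already established. Recall that we have the synchronized automaton \texttt{a105774} for $a(n)$, the synchronized automaton \texttt{phin} for the map $n \mapsto \lfloor n\varphi \rfloor$ (taken from \cite{Shallit:2022}), and the automaton \texttt{even} of Figure~\ref{fig0} accepting the Fibonacci representations of even numbers. Two natural numbers have the same parity exactly when they are both even or both odd, so the assertion $a(n) \equiv \lfloor n\varphi\rfloor \pmod 2$ for all $n$ is captured by the first-order sentence
\begin{verbatim}
eval checkparity "?msd_fib An,x,y ($a105774(n,x) & $phin(n,y)) =>
   ($even(x) <=> $even(y))":
\end{verbatim}
I expect \texttt{Walnut} to return \texttt{TRUE}, which would constitute a rigorous proof.

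A few remarks on why this query faithfully encodes the claim. Because \texttt{a105774} and \texttt{phin} were already verified to compute total functions, for each $n$ there is exactly one $x$ with $\texttt{a105774}(n,x)$ and exactly one $y$ with $\texttt{phin}(n,y)$, namely $x=a(n)$ and $y=\lfloor n\varphi\rfloor$; hence the sentence above genuinely ranges over all $n \geq 0$ and says precisely that $a(n)$ and $\lfloor n\varphi\rfloor$ always share a parity. The standing convention that all automata are insensitive to leading zeros means there are no boundary subtleties at small inputs; in particular the base cases $a(0)=0$, $\lfloor 0\rfloor = 0$ and $a(1)=1$, $\lfloor\varphi\rfloor = 1$ are handled automatically.

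As for where any difficulty lies: essentially nowhere, since this is a routine decidable check, and the real ``content'' was already packed into the earlier verification that the guessed automaton \texttt{a105774} is correct. If one preferred an automata-theoretic certificate avoiding \texttt{phin}, an alternative is to build the DFAO for $a(n) \bmod 2$ (by applying \texttt{combine} to the even/odd predicates composed with \texttt{a105774}) and the DFAO for $\lfloor n\varphi\rfloor \bmod 2$, minimize both, and observe that they coincide; but the single \texttt{eval} above is the most economical route.
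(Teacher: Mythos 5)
Your proposal is correct and coincides with the paper's own proof: the paper uses exactly the same \texttt{Walnut} query (preceded by the same \texttt{def even} command), and \texttt{Walnut} returns \texttt{TRUE}.
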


\begin{proof}
We use the following {\tt Walnut} code:
\begin{verbatim}
def even "?msd_fib Ek n=2*k":
eval checkparity "?msd_fib An,x,y ($a105774(n,x) & $phin(n,y)) =>
   ($even(x) <=> $even(y))":
\end{verbatim}
and {\tt Walnut} returns {\tt TRUE}.
\end{proof}

\section{Distinct values}

Given a sequence taking values in $\Enn$ we can consider the same
sequence obtained by scanning the terms from left-to-right, and deleting
any values that previous occurred in the sequence.    We call this
the {\it distinctness transform} of the original sequence.
For example, the distinctness transform of
\seqnum{A105774} starts $0,1,2,4,7,6,12,11,\ldots$.
We have the following new theorem.
\begin{theorem}
The following problem is decidable:
given two synchronized automata computing $s(n)$ and
$s'(n)$, whether $(s'(n))$ is the distinctness
transform of $(s(n))$.
\label{new2}
\end{theorem}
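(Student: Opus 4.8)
The plan is to reduce the distinctness-transform property to first-order (and linear-representation) questions over the Fibonacci numeration system, exactly as Theorem~\ref{new1} reduced the permutation property. The key observation is that $(s'(n))$ being the distinctness transform of $(s(n))$ is equivalent to the conjunction of three effectively checkable conditions: (i) $(s'(n))$ enumerates, in increasing order of first appearance, exactly the set of values taken by $(s(n))$; (ii) each value of $(s(n))$ is listed exactly once in $(s'(n))$; and (iii) the ordering in $(s'(n))$ matches the ``first occurrence'' ordering in $(s(n))$. All three can be phrased in terms of the first-occurrence function of $s$ and a counting (linear-representation) argument.

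First I would define, for a synchronized $s$, the auxiliary synchronized relation ``$v$ occurs in $s$ at or before position $m$'', namely $\exists i\ (i\le m)\wedge s(i)=v$, and from it the first-occurrence position $\mathrm{first}_s(v)=\min\{i : s(i)=v\}$, which is itself synchronized (it is definable by a first-order formula from $s$, and synchronized functions are closed under such definitions; see \cite{Shallit:2021h,Shallit:2022}). Then the statement ``$(s'(n))$ is the distinctness transform of $(s(n))$'' becomes: for all $n$, $s'(n)$ is a value taken by $s$; distinct indices $n\ne n'$ give distinct values $s'(n)\ne s'(n')$; every value $v$ taken by $s$ equals $s'(n)$ for some $n$; and for all $m<n$ we have $\mathrm{first}_s(s'(m)) < \mathrm{first}_s(s'(n))$. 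The first, third and fourth of these are plainly first-order in the structure $\langle\Enn,+,s,s'\rangle$ and hence decidable by running the relevant synchronized automata through {\tt Walnut}. The only subtle clause is injectivity of $n\mapsto s'(n)$ combined with surjectivity onto the value set of $s$; but once we know $(s'(n))$ is strictly increasing with respect to $\mathrm{first}_s\circ s'$ and that every value of $s$ appears, injectivity follows automatically, so even this reduces to the first-order checks above. Alternatively, one can mimic Theorem~\ref{new1} directly: build the linear representation counting, for each $v$, the number of $n$ with $s'(n)=v$, subtract the $\{0,1\}$-valued indicator of ``$v$ is a value of $s$'' (itself computable as a synchronized-hence-automatic sequence, so with a linear representation), and test whether the difference is identically zero, using decidability of the zero-representation problem \cite[Chap.~9]{Shallit:2022}.

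The main obstacle I anticipate is establishing that $\mathrm{first}_s$ is genuinely a synchronized function and bounding its size well enough to run the checks. The definition $\mathrm{first}_s(v)=i \iff s(i)=v \wedge \forall j\,(j<i \Rightarrow s(j)\ne v)$ is unambiguous, and the standard closure results give that it is synchronized, but one must be careful that the formula has the right quantifier structure to remain inside the decidable logical theory; this is routine given the framework of \cite{Shallit:2021h} but is the one spot where a genuine argument (rather than a {\tt Walnut} one-liner) is needed. A secondary, purely pragmatic concern is the state-complexity blow-up: composing $\mathrm{first}_s$ with $s'$ and comparing under $<$ may yield a large automaton, but since we only need a yes/no decidability statement, not an efficient algorithm, this does not affect correctness. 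I therefore expect the proof to consist of: (1) showing $\mathrm{first}_s$ is synchronized; (2) writing the four first-order conditions above; (3) invoking the decision procedure for the structure $\langle\Enn,+,s,s'\rangle$ (equivalently, running {\tt Walnut}); and observing that the conjunction holds iff $(s'(n))$ is the distinctness transform of $(s(n))$.
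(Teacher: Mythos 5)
Your proposal is correct and follows essentially the same route as the paper: both reduce the property to a first-order formula over the synchronized structure asserting (a) equality of ranges, (b) distinctness of the values $s'(n)$, and (c) compatibility of the ordering of $s'$ with the first-occurrence ordering in $s$, and then invoke decidability of such formulas. Your worry about whether $\mathrm{first}_s$ is synchronized is not really an obstacle, since its defining condition can simply be inlined as a subformula (as the paper's later {\tt Walnut} code for {\tt first\_occ} does), but this does not change the substance of the argument.
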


\begin{proof}
It suffices to create a first-order logical formula asserting
that $(s'(n))$ is the distinctness transform of $(s(n))$.  

This formula asserts
\begin{itemize}
\item[(a)] An integer occurs in the range of $s(n)$ iff it occurs
in the range of $s'(n)$;
\item[(b)] All the values $s'(n)$ for $n \geq 0$ are distinct;
\item[(c)] For all $x$ and $y$, if $x$'s first occurrence in $(s(n))$
precedes that of $y$'s first occurrence, then $x$'s unique occurrence
in $(s'(n))$ also precedes that of $y$.
\end{itemize}
\end{proof}

Let us carry this out for \seqnum{A105774}.   Let $(a'(n))$ denote
the distinctness transform of $(a(n))$.  
The first few values are given in Table~\ref{tab10}.
\begin{table}[H]
\begin{center}
\begin{tabular}{c|ccccccccccccccccccccc}
$n$ & 0& 1& 2& 3& 4& 5& 6& 7& 8& 9&10&11&12&13&14&15&16&17&18&19 \\
\hline
$a'(n)$ & 0& 1& 2& 4& 7& 6&12&11& 9&20&19&17&14&15&33&32&30&27&28&22
\end{tabular}
\end{center}
\label{tab10}
\end{table}
We can guess the automaton for this sequence and then verify it.
\begin{figure}[H]
\begin{center}
\includegraphics[width=6.5in]{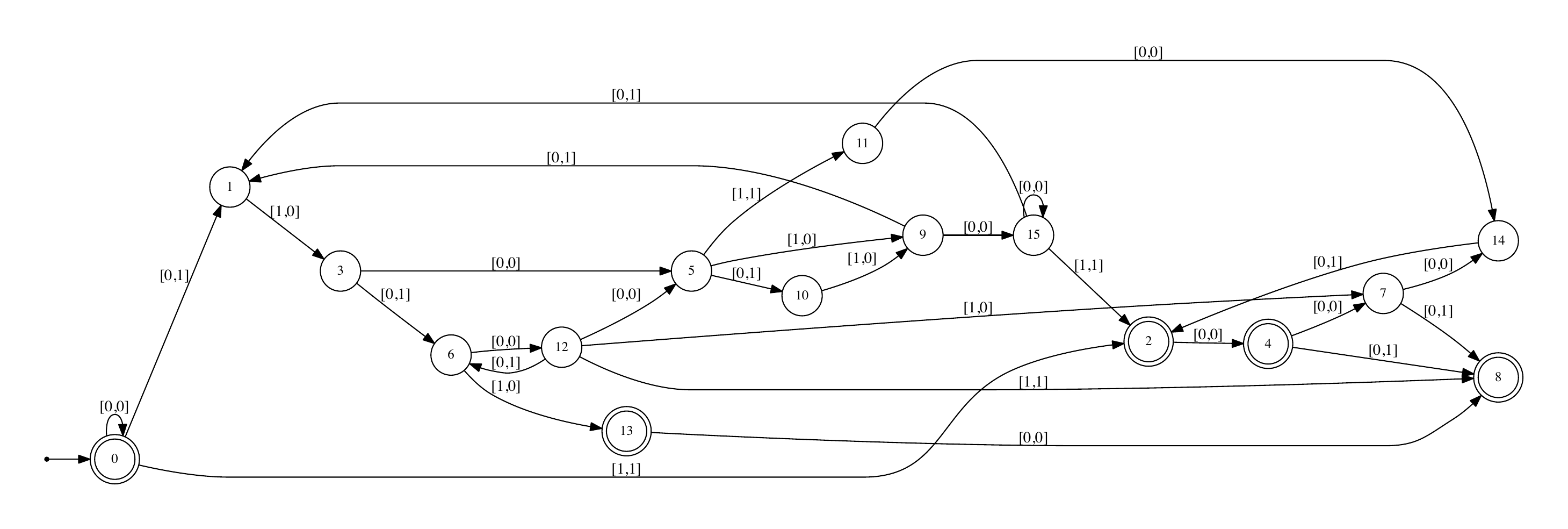}
\end{center}
\caption{Synchronized Fibonacci automaton for $a'(n)$.}
\label{fig10}
\end{figure}

First let us check that our guessed automaton really does compute a function:
\begin{verbatim}
def checkap1 "?msd_fib An Ex $aprime(n,x)":
def checkap2 "?msd_fib ~En,x1,x2 x1!=x2 & $aprime(n,x1) & $aprime(n,x2)":
\end{verbatim}
and {\tt Walnut} returns {\tt TRUE} for both.

Next, let's check the three conditions guaranteeing that $a'$ is the
distinctness transform of $a$:

\begin{verbatim}
eval check_distinct1 "?msd_fib Ax (Em $a105774(m,x)) <=> (En $aprime(n,x))":
eval check_distinct2 "?msd_fib ~En1,n2,x n1!=n2 & $aprime(n1,x) &
   $aprime(n2,x)":
def first_occ "?msd_fib $a105774(y,n) & Ax (x<y) => ~$a105774(x,n)":
# provided n occurs in A105774, gives the least position y 
# where it occurs
eval check_distinct3 "?msd_fib Ax,y,i,j ($first_occ(x,i) & $first_occ(y,j)
   & i<j) => Em,n $aprime(m,x) & $aprime(n,y) & m<n":
\end{verbatim}
and {\tt Walnut} returns {\tt TRUE} for all of them.

We now examine the run-length encoding of \seqnum{A105774}.  The {\it run-length
encoding\/} of an arbitrary sequence $(s(n))_{n \geq 0}$ counts the size of 
maximal blocks of consecutive identical values \cite{Golomb:1966a}.   For \seqnum{A105774}
it is $1,2,1,2,2,1,2,1,2,\ldots$.

Once we have the distinct terms of \seqnum{A105774}, we can easily
determine the run-length encoding of the sequence.
\begin{theorem}
The run-length encoding of \seqnum{A105774} is
\seqnum{A001468}, that is,
$$1 (2-{\bf f}[0]) (2-{\bf f}[1]) (2-{\bf f}[2]) \cdots$$
where ${\bf f} = 01001\cdots$ is the infinite Fibonacci word
\cite{Berstel:1986b}.
\end{theorem}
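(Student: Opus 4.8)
The plan is to reduce the claim to a small number of first-order statements that {\tt Walnut} can verify, using the machinery already built up in the previous sections. The run-length encoding of $(a(n))$ is, by the propositions on consecutive identical and consecutive different terms, completely determined by the sequence $c(n)$ counting multiplicities: a value appearing once contributes a run of length $1$, a value appearing twice contributes (by the proposition that the two occurrences are consecutive) a run of length $2$, and values not appearing at all contribute nothing. So the $k$-th block length equals $c(p(k))$, where $p(k)$ is the position of the $k$-th \emph{distinct} value of $(a(n))$ read left to right — equivalently, $p(k)$ is tracked by the automaton {\tt aprime} for $a'(n)$ together with the first-occurrence automaton {\tt first\_occ}. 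Thus the run-length encoding, as a function of the index $k$, is the automatic sequence $k \mapsto c(a'^{-1}\text{-position of the }k\text{-th distinct value})$, and since all of these ingredients are Fibonacci-automatic or Fibonacci-synchronized, the composed sequence is Fibonacci-automatic, so {\tt Walnut} can build a DFAO for it.

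Concretely, first I would write {\tt Walnut} code that, given $k$, produces the position $q(k)$ in $(a(n))$ where the $k$-th new value first appears — this is just $\exists x\ (\$aprime(k,x)\ \&\ \$first\_occ(q,x))$ — and then {\tt combine} this with the DFAO {\tt C} for $c(n)$ to get a DFAO, call it {\tt RLE}, computing the $k$-th run length. Next I would obtain, or guess-and-check, the standard Fibonacci DFAO for the infinite Fibonacci word ${\bf f}$ (its $n$-th letter), which is well known and available in the {\tt Walnut} library. Then the theorem amounts to: {\tt RLE}$[0] = 1$, and {\tt RLE}$[k+1] = 2 - {\bf f}[k]$ for all $k \geq 0$. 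Both are finite-state facts; I would check them with a single {\tt eval} of the form
\begin{verbatim}
eval checkrle "?msd_fib (RLE[0]=@1) & Ak RLE[k+1]=@2-F[k]":
\end{verbatim}
(adjusting syntax so the arithmetic on outputs is expressed legally, e.g.\ via case split on the two possible values of ${\bf f}[k]$), and {\tt Walnut} returning {\tt TRUE} finishes the proof. One should also confirm that the resulting sequence is indeed \seqnum{A001468} by comparing initial terms, or by noting that \seqnum{A001468} is \emph{defined} in the OEIS precisely as $1,(2-{\bf f}[0]),(2-{\bf f}[1]),\ldots$.

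The main obstacle I anticipate is not the verification step but getting the \emph{indexing} exactly right when composing the automata: one must be careful that {\tt aprime} enumerates distinct values in order of first appearance (which was verified earlier, conditions \textbf{check\_distinct1–3}), that {\tt first\_occ} really returns the \emph{least} position, and that the off-by-one shift between "the $0$-th run is special (length $1$)" and "run $k+1$ has length $2-{\bf f}[k]$" is handled correctly — this shift is exactly why the statement singles out the leading $1$. A secondary, more conceptual point worth making explicit in the writeup is \emph{why} the run-length structure is governed entirely by $c$: because runs of $(a(n))$ are maximal constant blocks, and the earlier propositions show a repeated value occupies two \emph{consecutive} positions and no value repeats three times, so each maximal block has length $1$ or $2$ according to $c$ of its value, and distinct blocks correspond bijectively to distinct values in order of first occurrence. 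Once that observation is in place, everything else is a routine {\tt Walnut} computation.
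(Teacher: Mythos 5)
Your proposal is correct and follows essentially the same route as the paper: the paper's proof defines an automaton {\tt nthrun2} that uses {\tt aprime}, {\tt first\_occ}, and {\tt a105774} to detect whether the $n$-th run has length $2$ (by testing whether the $n$-th distinct value recurs at the position immediately after its first occurrence), and then verifies {\tt An \$nthrun2(n+1) <=> F[n]=@0}, which is exactly your plan of expressing the $k$-th run length as a Fibonacci-automatic function of $k$ and comparing it, with the index shift, against the Fibonacci word. The only nit is notational: the $k$-th run length is $c$ of the $k$-th distinct \emph{value} $a'(k)$, not $c$ of its first-occurrence \emph{position}, so the composition should be $C$ evaluated at $x$ with {\tt \$aprime(k,x)} (or, as the paper does, a direct test of whether the value reappears at the next position), but this does not affect the soundness of your approach.
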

\begin{proof}
The claim can be verified by the following {\tt Walnut} code:
\begin{verbatim}
def nthrun2 "?msd_fib Ex,y $aprime(n,x) & $first_occ(x,y) & 
   $a105774(y+1,x)":
eval compare_fib "?msd_fib An $nthrun2(n+1) <=> F[n]=@0":
\end{verbatim}
\end{proof}
\section{Other results}
\label{other}

Define $w(n)$ to be the least integer such that $a(w(n)) \geq n$.
The first few terms of this sequence are given in Table~\ref{tab19}.
\begin{table}[H]
\begin{center}
\begin{tabular}{c|ccccccccccccccccccccc}
$n$ & 0& 1& 2& 3& 4& 5& 6& 7& 8& 9&10&11&12&13&14&15&16&17&18&19&20 \\
\hline
$w(n)$ & 0 & 1& 3& 4& 4& 6& 6& 6& 9& 9& 9& 9& 9&14&14&14&14&14&14&14&14
\end{tabular}
\end{center}
\label{tab19}
\end{table}

\begin{proposition}
If $n\geq 2$ and $F_k \leq n < F_{k+1}$ then $w(n) = F_k + 1$.
\end{proposition}

\begin{proof}
We use the following {\tt Walnut} commands:
\begin{verbatim}
def trapfib2 "?msd_fib $adjfib(x,y) & x<=k & y>k":
def wseq "?msd_fib (Em $a105774(x,m) & m>=n) & 
   (Ai,p (i<x & $a105774(i,p)) => p<n)":
eval propw "?msd_fib Ax,y,n,m (n>=2 & $trapfib2(n,x,y) & $wseq(n,m))
   => m=x+1":
\end{verbatim}
and {\tt Walnut} returns {\tt TRUE}.
\end{proof}

We now turn to describing the fixed points of $a$.
\begin{theorem}
We have $a(n)=n$ for $n>0$ if and only if
$(n)_F \in 1 (00100^*1)^* \{ \epsilon,01,010,0100 \}$.
\end{theorem}
\begin{proof}
We use the {\tt Walnut} command
\begin{verbatim}
def fixed "?msd_fib $a105774(n,n)":
\end{verbatim}
and it produces the automaton in Figure~\ref{fig20}, from
which we can directly read off the result.
\begin{figure}[H]
\begin{center}
\includegraphics[width=6.5in]{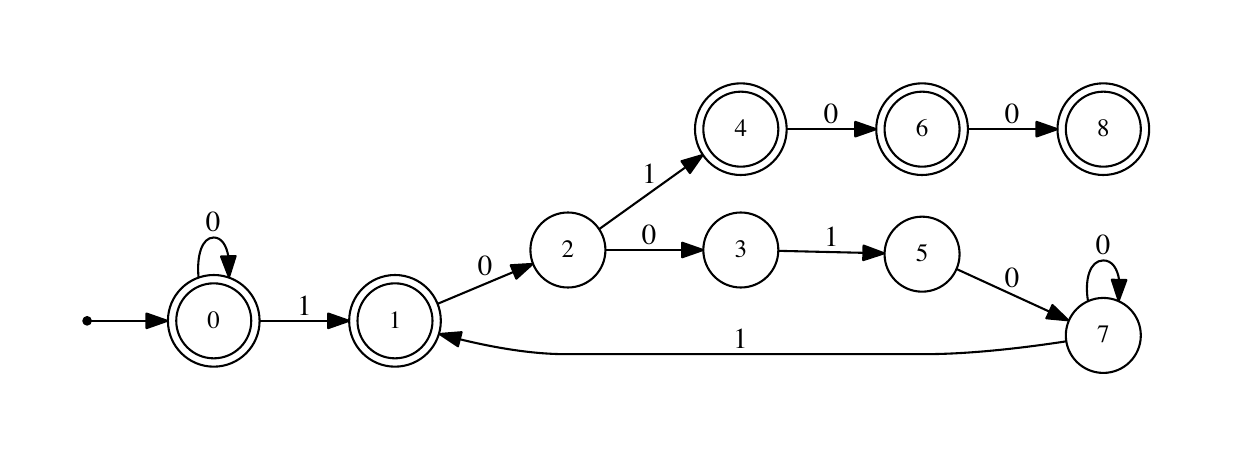}
\end{center}
\caption{Solutions to $a(n) = n$.}
\label{fig20}
\end{figure}
\end{proof}

For the remainder of this section, let $a(n)$ represent (as before)
the $n$'th term of \seqnum{A105774}, 
let $b(n) = \lfloor n \varphi \rfloor$,
and let $c(n) = \lfloor n \varphi^2 + 1/2 \rfloor$.
We will find some relations concerning the composition of the functions
$a$, $b$, and $c$.  We introduce the function
$x(n) := a(b(n)) - b(a(n))$.

\begin{theorem}
\leavevmode
\begin{itemize}
\item[(a)] $a(b(n)) \geq b(a(n))$ for all $n \geq 0$.
\item[(b)] For $n \geq 1$,
the difference $x(n) := a(b(n)) - b(a(n))$ is exactly the parity of the number of $1$'s
in the Fibonacci representation of $n-1$, that is, sequence
\seqnum{A095076}.
\item[(c)] $b(a(b(n))) \geq a(b(a(n)))$ for all $n \geq 0$.
\item[(d)] $a(a(b(n)))-a(b(a(n))) \in \{-2,0,2\}$ for all $n \geq 0$.
\item[(e)] If $d(n) := c(a(n)) - a(b(n)) - a(n)$, then
$d(n) \in \{ -1,0, 1 \}$ for all $n \geq 0$.
\item[(f)] For $n\geq 1$ we have $d(n) = 0$ iff ${\bf f}[n-1]=1$.
\item[(g)] For $n \geq 1$ we have
$c(a(b(n)))+a(b(n))-a(b(b(n)))-2b(a(n)) = 1$.
\end{itemize}
\end{theorem}

\begin{proof}
We use the following {\tt Walnut} commands:
\begin{verbatim}
reg even1 msd_fib "(0*10*1)*0*":
# even number of 1's in Fibonacci representation

def ab "?msd_fib Ex $phin(n,x) & $a105774(x,z)":
# z = a105774(floor(phi*n))

def ba "?msd_fib Ex $a105774(n,x) & $phin(x,z)":
# z = floor(phi*a105774(n))

# check that ab >= ba for all n so subtraction makes sense over N:
def test "?msd_fib An,x,y ($ab(n,x) & $ba(n,y)) => x>=y":
# Walnut says TRUE

def xx "?msd_fib Ex,y $ab(n,x) & $ba(n,y) & z=x-y":
# 12 states

#test that a(b(n)) = b(a(n)) precisely when parity of 1's in
#Fibonacci expansion of n-1 is 0:

eval test1 "?msd_fib An $xx(n+1,0) <=> $even1(n)":
# Walnut says TRUE
#so it follows immediately
# that a(b(n)) > b(a(n)) precisely when parity of 1's in
#Fibonacci expansion of n-1 is 1, no need to test it

def aba "?msd_fib Ex $ba(n,x) & $a105774(x,z)":
# 38 states
def bab "?msd_fib Ex $ab(n,x) & $phin(x,z)":
# 18 states

# check the inequality bab(n) >= aba(n):
eval test3 "?msd_fib An,x,y ($bab(n,x) & $aba(n,y)) => x>=y":
# Walnut says true

def aab "?msd_fib Ex $ab(n,x) & $a105774(x,z)":
# 34 states

# check the proposition that aab(n) - aba(n) takes values in
# {-2,0,2} only:
eval test4 "?msd_fib An,x,y ($aab(n,x) & $aba(n,y)) => (x=y|x=y+2|y=x+2)":
# Walnut says TRUE

def a004937 "?msd_fib Ex $phi2n(2*n,x) & z=(x+1)/2":
def ca "?msd_fib Ex $a105774(n,x) & $a004937(x,z)":
def dp "?msd_fib Ew,x,y $ca(n,w) & $ab(n,x) & $a105774(n,y) & z+x+y=w+1":
# here dp computes d(n)+1
eval test1 "?msd_fib An,x $dp(n,x) => (x=0|x=1|x=2)":
eval test2 "?msd_fib An (n>=1) => (F[n-1]=@1 <=> $dp(n,1))":

def cab "?msd_fib Ex $ab(n,x) & $a004937(x,z)":
def abb "?msd_fib Ex $phin(n,x) & $ab(x,z)":
eval test3 "?msd_fib An,r,s,t,u (n>=1 & $cab(n,r) & $abb(n,s) & $ab(n,t) 
   & $ba(n,u)) => r+t=s+2*u+1":
\end{verbatim}
\end{proof}

\section{Carlitz-style results}

In a classic paper \cite{Carlitz&Scoville&Hoggatt:1972}, Carlitz et al.\ obtained results involving compositions of the functions
$b(n) = \lfloor \varphi n \rfloor$ and
$d(n) = \lfloor \varphi^2 n \rfloor$.   We can obtain a version of
their Theorem 13, involving 
$a(n) = \seqnum{A105774}(n)$,
as follows:\footnote{We warn the reader that our $b(n)$
is written as $a(n)$ in the paper of Carlitz et al., and our $d(n)$ is their
$b(n)$.}

We adopt the following notation:   if $u = u_1 \cdots u_t \in
\{ a, b,d,x\}^*$, then by $u(n)$ we mean $(u_1 \circ \cdots \circ u_t)(n)$.
Recall that $x$ was defined in Section~\ref{other}.
\begin{theorem}
Let $u \in \{ b,d \}^+$ be a nonempty word with $i$ copies of $b$ and
$j$ copies of $d$.   Then for all $n \geq 1$ we have
$$ au(n) = F_{i+2j} \cdot ab(n) + F_{i+2j-1} \cdot a(n) + C(u) \cdot (2x(n) - 1),$$
where $C(u)$ is defined as follows:
\begin{equation}
C(u) = \begin{cases}
	0, & \text{if $u = b$}; \\
	1, & \text{if $u = d$}; \\
	F_{i+2j-1} + C(v), & \text{if $u = vb$}; \\
	F_{i+2j-1} - C(v), & \text{if $u = vd$}.
	\end{cases}
\label{cu}
\end{equation}
\end{theorem}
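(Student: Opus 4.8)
The plan is to prove the identity by induction on the length $|u| = i+j$ of the word $u \in \{b,d\}^+$, with the base cases $u = b$ and $u = d$ handled by hand and the inductive step reducing $u = vb$ or $u = vd$ to the claim for the shorter word $v$. The engine of the induction will be a small collection of ``one-step'' rewriting rules expressing $a(b(m))$, $a(d(m))$, $b(m)$, and $d(m)$ in terms of $a(m)$, $ab(m)$, and $x(m)$ — in effect, a closed linear action of the monoid $\{b,d\}^*$ on the triple $(a(n), ab(n), 2x(n)-1)$. Each such rule will be verified by \texttt{Walnut}, exactly in the style of the previous section: one defines the composed synchronized automata (e.g.\ \texttt{abd}, \texttt{abb}, etc.) and checks an arithmetic relation among their outputs.

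First I would record the base cases. For $u = b$ we have $i=1$, $j=0$, so $F_{i+2j} = F_1 = 1$ and $F_{i+2j-1} = F_0 = 0$, and the asserted identity reads $ab(n) = 1\cdot ab(n) + 0\cdot a(n) + 0$, which is trivially true; this is why $C(b) = 0$. For $u = d$ we have $i=0$, $j=1$, so $F_{i+2j} = F_2 = 1$, $F_{i+2j-1} = F_1 = 1$, and we need $ad(n) = ab(n) + a(n) + (2x(n)-1)$; since $x(n) = ab(n)-ba(n)$ and $b,d$ satisfy $d(n) = n + b(n)$ (a standard Beatty identity, $\lfloor\varphi^2 n\rfloor = n + \lfloor\varphi n\rfloor$), this is a single \texttt{Walnut} check analogous to \texttt{test3} in the previous section. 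These two computations also pin down the definition of $C$ and motivate the recursion~\eqref{cu}.

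For the inductive step, suppose the identity holds for $v$ with $i'$ copies of $b$ and $j'$ copies of $d$. Write $u = vb$, so the inner function applied first is $b$, and $u(n) = v(b(n))$; the parameters become $i = i'+1$, $j = j'$. Applying the induction hypothesis at the argument $b(n)$ gives $au(n) = av(b(n)) = F_{i'+2j'}\cdot ab(b(n)) + F_{i'+2j'-1}\cdot a(b(n)) + C(v)\cdot(2x(b(n))-1)$. Now I need three \texttt{Walnut}-verified substitution identities: one expressing $ab(b(n)) = abb(n)$ in terms of $ab(n)$, $a(n)$, $x(n)$; one doing the same for $a(b(n)) = ab(n)$ (trivial); and one for $x(b(n))$. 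Substituting and collecting terms, using the Fibonacci recurrence $F_{i+2j} = F_{i'+1+2j'} = F_{i'+2j'} + F_{i'+2j'-1}$, should reproduce the claimed coefficients $F_{i+2j}$ and $F_{i+2j-1}$ on $ab(n)$ and $a(n)$, and the leftover terms should combine into $C(u)\cdot(2x(n)-1)$ with $C(u) = F_{i+2j-1} + C(v)$. The case $u = vd$ is parallel, using $d(n) = n + b(n)$ to rewrite $ad(\cdot)$, $ab(d(\cdot))$, and $x(d(\cdot))$, and the sign flip $C(u) = F_{i+2j-1} - C(v)$ will emerge from the way $x$ transforms under $d$.

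The main obstacle is getting the ``one-step'' substitution lemmas exactly right — in particular understanding how $x(n) = a(b(n)) - b(a(n))$ transforms when $n$ is replaced by $b(n)$ or $d(n)$, since part~(b) of the previous theorem only tells us $x(n)$ is a $\{0,1\}$-valued sequence (the parity of the number of $1$'s in $(n-1)_F$), and composing that with $b$ or $d$ needs its own automaton computation. Concretely I expect to need to verify, via \texttt{Walnut}, relations of the shape $abb(n) = \alpha\, ab(n) + \beta\, a(n) + \gamma(2x(n)-1)$ and $x(b(n)) = $ (linear in $x(n)$, possibly with a constant), and likewise with $d$ in place of the inner $b$; pinning down these small integer constants $\alpha,\beta,\gamma$ by inspecting the relevant synchronized automata is the delicate part, but each is a finite check of exactly the type already carried out in the Carlitz-style section. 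Once those finitely many identities are confirmed, the induction is a routine manipulation of the Fibonacci recurrence, and the closed form for $C(u)$ in~\eqref{cu} follows by unwinding.
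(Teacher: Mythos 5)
Your proposal is correct and matches the paper's proof essentially step for step: the paper establishes exactly the one-step substitution lemmas you anticipate (namely $xb(n)=x(n)$, $xd(n)=1-x(n)$, $abb(n)=ab(n)+a(n)+2x(n)-1$, $ad(n)=abb(n)$, and $abd(n)=2ab(n)+a(n)+2x(n)-1$, all verified with {\tt Walnut}) and then runs the same induction on $|u|$ with the two cases $u=vb$ and $u=vd$, collecting terms via the Fibonacci recurrence. The only cosmetic difference is that you invoke the Beatty identity $d(n)=n+b(n)$ to motivate the $u=d$ base case, whereas the paper simply checks $ad(n)=abb(n)$ directly by machine.
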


\begin{proof}
First we need some lemmas:
\begin{lemma}
We have
\begin{itemize}
\item[(a)] $xb(n) = x(n)$ for all $n \geq 0$.
\item[(b)] $xd(n) = 1-x(n)$ for all $n \geq 1$.
\item[(c)] $abb(n) = ab(n) + a(n) + 2x(n) - 1$ for all $n \geq 1$.
\item[(d)] $abb(n) = ad(n)$ for all $n \geq 0$.
\item[(e)] $abd(n) = 2ab(n) + a(n) + 2x(n) - 1$ for all $n \geq 1$.
\end{itemize}
\end{lemma}

\begin{proof}
These can be proved with {\tt Walnut} as follows.   Recall that
{\tt phin} is {\tt Walnut}'s name for $b$ and {\tt phi2n} is {\tt Walnut}'s
name for $d$.
\begin{verbatim}
eval checka "?msd_fib An,y,z,w (n>=0 & $xx(n,y) & $phin(n,z) & $xx(z,w)) 
   => w=y":
eval checkb "?msd_fib An,y,z,w (n>=1 & $xx(n,y) & $phi2n(n,z) & $xx(z,w)) 
   => w+y=1":
eval checkc "?msd_fib An,y,z,w,t (n>=1 & $abb(n,y) & $ab(n,z) & 
   $a105774(n,w) & $xx(n,t)) => y+1=z+w+2*t":
def ad "?msd_fib Ew $phi2n(n,w) & $a105774(w,z)":
def abd "?msd_fib Ew,y $phi2n(n,w) & $phin(w,y) & $a105774(y,z)":
eval checkd "?msd_fib An,z (n>=0 & $ad(n,z)) => $abb(n,z)":
eval checke "?msd_fib An,y,z,w,t (n>=1 & $abd(n,y) & $ab(n,z) & 
   $a105774(n,w) & $xx(n,t)) => y+1=2*z+w+2*t":
\end{verbatim}
and {\tt Walnut} returns {\tt TRUE} for all.
\end{proof}

Now we can prove the desired result by induction on $|u|$.   
It is clear from the above results for $|u|=1$.  Now assume $|u|\geq 2$
and consider two cases:  $u = vb$ and $u = vd$.

$u=vb$:  In this case we have, by induction, that
$$ av(n) = F_{i+2j-1} \cdot ab(n) + F_{i+2j-2} \cdot a(n) + C(v) \cdot (2x(n) - 1).$$
Substituting $b(n)$ for $n$, we get
\begin{align*}
avb(n) &= F_{i+2j-1} \cdot abb(n) + F_{i+2j-2} \cdot ab(n) + C(v) \cdot (2xb(n) - 1) \\
&= F_{i+2j-1} \cdot (ab(n)+a(n) + 2x(n)-1) + F_{i+2j-2} ab(n) + C(v) \cdot (2xb(n) - 1) \\
&= F_{i+2j} \cdot ab(n) + F_{i+2j-1} a(n) + (C(v) + F_{i+2j-1}) (2x(n) - 1),
\end{align*}
as desired.

$u=vd$:  By induction we have
$$ av(n) = F_{i+2j-2} \cdot ab(n) + F_{i+2j-3} \cdot a(n) + C(v) \cdot (2x(n) - 1).$$
Substituting $d(n)$ for $n$, we get
\begin{align*}
avd(n) &= F_{i+2j-2} \cdot abd(n) + F_{i+2j-3} \cdot ad(n) + C(v) \cdot (2xd(n) - 1) \\
&= F_{i+2j-2} \cdot (2ab(n) + a(n) + 2x(n) - 1) +
F_{i+2j-3} (ab(n) + a(n) + 2x(n) - 1) \\
& \quad + C(v) (2(1-x(n)) - 1) \\
&= F_{i+2j} \cdot ab(n) + F_{i+2j-1} a(n) + (F_{i+2j-1} - C(v)) (2x(n)-1) .
\end{align*}
This completes the proof.
\end{proof}

We can obtain a linear representation for $C(u)$, as follows.
\begin{corollary}
Define vectors $L, R$ and a matrix-valued morphism $\mu$ as follows:
\begin{align*}
L &= \left[ \begin{array}{ccc}
	0 & 0 & 1 
	\end{array} \right] \\
\mu(b) &= \left[ \begin{array}{ccc}
	0 & 1 & 0 \\
	1 & 1 & 0 \\
	0 & 1 & 1 
	\end{array} \right] \\
\mu(d) &= \left[ \begin{array}{ccc}
	0 & 0 & 1 \\
	0 & 1 & 1 \\
	1 & 2 & 1 
	\end{array} \right]\\
R &= \left[ \begin{array}{c}
	1 \\
	0 \\
	0 \\
	\end{array}
	\right] .
\end{align*}
Then $C(u) = L \mu(u) R$.
\end{corollary}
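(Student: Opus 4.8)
The plan is to verify that the recursion \eqref{cu} for $C(u)$ is computed by the linear representation $(L,\mu,R)$ by induction on $|u|$, simultaneously tracking two auxiliary quantities alongside $C(u)$ itself. Observe that the recursion for $C(u)$ involves not only $C(v)$ but also the Fibonacci number $F_{i+2j-1}$, where $i,j$ are the numbers of $b$'s and $d$'s in $u$; and if $u=vb$ then the relevant index for $v$ is $(i-1)+2j = i+2j-1$, while if $u=vd$ it is $i+2(j-1) = i+2j-2$. So the ``weight'' $i+2j$ increases by $1$ when we append a $b$ and by $2$ when we append a $d$. This suggests carrying the state vector $[\,F_{i+2j-2},\ F_{i+2j-1},\ C(u)\,]$ (reading $u$ left to right, appending letters on the right), since Fibonacci numbers themselves satisfy a linear recursion under ``shift by $1$'' and ``shift by $2$.''

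Concretely, I would set $v_u = L\,\mu(u)$ and check that $v_u = [\,F_{i+2j-2},\ F_{i+2j-1},\ C(u)\,]$ holds for all nonempty $u \in \{b,d\}^+$ by induction. For the base cases, $v_b = L\mu(b) = [\,1,\,1,\,0\,] = [F_0,F_1,C(b)]$ and $v_d = L\mu(d) = [\,1,\,2,\,1\,] = [F_1,F_2,C(d)]$, matching $i+2j = 1$ and $2$ respectively. For the inductive step, if $w = ub$ then the weight goes from $m := i+2j$ to $m+1$, and one computes $v_{w} = v_u\,\mu(b)$; reading off the components, the new first two entries are $F_{m-1}$ and $F_{m-2}+F_{m-1} = F_m$ (correct, since $F_{(m+1)-2} = F_{m-1}$ and $F_{(m+1)-1}=F_m$), and the new third entry is $F_{m-1} + C(u) = F_{m} \cdot[\text{wait}]$---here I must be careful with indices: the recursion says $C(ub) = F_{i+2j-1} + C(u)$ where now $i+2j$ refers to $w=ub$, i.e.\ equals $m+1$, so $C(ub) = F_m + C(u)$; and indeed the third column of $\mu(b)$ picks out the second entry of $v_u$ (which is $F_{m-1}$) plus the third ($C(u)$), giving $F_{m-1}+C(u)$, \emph{not} $F_m + C(u)$. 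So the naive bookkeeping is off by one, which means the correct state vector is actually $[\,F_{i+2j-1},\ F_{i+2j},\ C(u)\,]$ (shifted up by one index), and I would redo the base-case check with that convention: $v_b = [1,1,0] = [F_1,F_2,C(b)]$ forces $i+2j=2$, but $b$ has $i+2j=1$, so in fact the vector is $[\,F_{i+2j},\ F_{i+2j+1},\ C(u)\,]$. I would pin down the exact index shift by matching the base cases and the two transition matrices against \eqref{cu}, and then the inductive step becomes a three-by-three matrix identity for each of $\mu(b)$ and $\mu(d)$ that one simply reads off column by column.

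The main obstacle, then, is purely bookkeeping: getting the index offset between the ``Fibonacci coordinates'' in the state vector and the weight $i+2j$ exactly right so that both the $b$-recursion ($C(vb) = F_{i+2j-1} + C(v)$) and the $d$-recursion ($C(vd) = F_{i+2j-1} - C(v)$) come out of the same $3\times 3$ matrices, while also ensuring the two Fibonacci coordinates propagate correctly under both shift-by-$1$ (append $b$) and shift-by-$2$ (append $d$). Once the correct invariant is identified, I would verify: (i) $L = [0,0,1]$ gives, after reading the first letter, the right seed for whichever of $b$ or $d$ appears first; (ii) $R = [1,0,0]^T$ extracts the third coordinate $C(u)$; and (iii) the inductive identities $v_u\mu(b)$ and $v_u\mu(d)$ match the claimed invariant, using the Fibonacci identities $F_{k+1} = F_k + F_{k-1}$ and $F_{k+2} = 2F_k + F_{k-1}$ (the latter being exactly the $(3,2)$ and $(3,1)$ entries of $\mu(d)$). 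Since all of this reduces to finitely many small arithmetic checks once the invariant is stated, the corollary follows; I would present it as a short induction with the invariant spelled out, and relegate the three matrix multiplications to the reader.
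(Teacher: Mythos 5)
Your overall strategy---augment $C(u)$ with enough Fibonacci data that the recurrence \eqref{cu} becomes linear, then verify a state-vector invariant by induction---is a reasonable way to produce \emph{some} linear representation for $C$, but it does not prove the corollary, because the invariant you propose is false for the \emph{given} $L$, $\mu$, $R$, and no index shift repairs it. Concretely, $L\mu(b)$ is the third \emph{row} of $\mu(b)$, namely $[0,1,1]$, not $[1,1,0]$ as you computed; its last entry is $1 \neq C(b)=0$, so $L\mu(u)$ cannot have the form $[\,F_\ast,\ F_\ast,\ C(u)\,]$. Moreover $R=[1,0,0]^T$ extracts the \emph{first} coordinate of a row vector, not the third as you assert in step (ii). The failure is not a bookkeeping offset: already $L\mu(db) = [2,4,3]$, and $4$ is not a Fibonacci number, so the second and third coordinates of $L\mu(u)$ are not Fibonacci numbers at all. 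The correct invariant for these matrices is $L\mu(v) = [\,C(v),\ C(vb),\ C(vd)\,]$ (with $C(\epsilon)=0$), and verifying it requires the four relations $C(vbb) = C(v)+C(vb)+C(vd)$, $C(vbd)=C(vd)$, $C(vdb)=C(vb)+2C(vd)$, $C(vdd)=C(v)+C(vb)+C(vd)$, which is exactly what the paper derives from \eqref{cu} by eliminating the Fibonacci numbers; that elimination step is the content you are missing.

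If you carry out your own plan correctly, with state $[\,C(u),\ F_{m},\ F_{m+1}\,]$ for $m=i+2j$, you get transition matrices different from $\mu(b)$ and $\mu(d)$ (one of them necessarily has a negative entry, to encode $C(vd)=F_{m-1}-C(v)$). That yields a valid $3\times 3$ linear representation for $C$, but to conclude the corollary you would still have to show it computes the same function as the specific representation in the statement, which brings you back to verifying the paper's relations or an equivalence of representations. So as written the proposal has a genuine gap.
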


\begin{proof}
We first obtain the following relations, for $v \in \{ b,d \}^+$:
\begin{align*}
C(vbb) &= C(v) + C(vb) + C(vd) \\
C(vbd) &= C(vd) \\
C(vdb) &= C(vb) + 2 C(vd) \\
C(vdd) &= C(v) + C(vb) + C(vd) .
\end{align*}
We just show how to obtain the first relation; the others are similar.
Letting $u = vbb$, and using \eqref{cu}, we get
\begin{align*}
C(u) &= C(vbb) = F_{i+2j-1} + C(vb) \\
C(vb) &= F_{i+2j-2} + C(v) 
\end{align*}
from which it follows that $C(u) = F_{i+2j} + C(v)$.
On the other hand, from \eqref{cu}, we have
$C(vd) = F_{i+2j-1} - C(v)$ 
and so
$$C(v) + C(vb) + C(vd) = C(v) + F_{i+2j-2} + C(v) + F_{i+2j-1} - C(v) 
= C(v) + F_{i+2j},$$ and the first identity follows.

Once we have the relations, we can combine them in the usual way to
obtain the given linear representation.
\end{proof}

\section{Going further}

It is possible to study many similar recurrences using exactly the
same techniques.   We list just three possibilities; many more should
occur to the interested reader.

\begin{enumerate}
\item One can study a variation of the defining recurrence
\eqref{eq1} where the Lucas numbers replace the Fibonacci numbers.
The resulting sequence is \seqnum{A118287} in the OEIS, and is computed
by a $102$-state synchronized automaton.  

With this automaton one can easily prove observations such as\\
\centerline{\seqnum{A118287}$(n)$ = \seqnum{A118287}$(n-2)$ if and only
if $n$ belongs to \seqnum{A003231}.}




\item One can generalize \eqref{eq1} by introducing two integer
parameters, $x$ and $y$, as follows:
\begin{equation}
a_{x,y}(n) = \begin{cases}
        n, & \text{if $n\leq 1$}; \\
        F_{j+1}x - a_{x,y}(n-F_j)y, & \text{if $F_j < n \leq F_{j+1}$ for
	$j \geq 2$}.
        \end{cases}
        \label{eq11}
\end{equation}
Equation \eqref{eq1} corresponds to $(x,y) = (1,1)$.  Numerical
experiments suggest that if $y = 1$, the resulting sequence
might be Fibonacci-synchronized for each $x \geq 1$.

\begin{itemize}
\item
Consider the particular case $(x,y) = (2,1)$.   Then one can prove
that $a_{2,1}(n)$ is computed by a $22$-state synchronized automaton.
Furthermore, $((a_{2,1} (n)-1)/2)_{n \geq 1}$ is
sequence \seqnum{A343647} and is a permutation of the positive integers.

\item
The case $(x,y) = (3,2)$ seems harder to analyze.  It is quite possible
that it is not Fibonacci-synchronized.
\end{itemize}

\item Another variation is the following:
\begin{equation}
b(n) = \begin{cases}
	n, & \text{if $n\leq 1$}; \\
	F_{j+1} - b(b(n-F_j)), & \text{if $F_j < n \leq F_{j+1}$ 
	for $j \geq 2$}.
	\end{cases}
	\label{eq12}
\end{equation}
This sequence is generated by a $24$-state synchronized automaton.

\end{enumerate}

\end{document}